\theoremstyle{plain}
\newtheorem{thm}{Theorem}[section]
\newtheorem{embthm}[thm]{Embedding Theorem}
\newtheorem{lem}[thm]{Lemma}
\newtheorem{prop}[thm]{Proposition}
\newtheorem{cor}[thm]{Corollary}
\theoremstyle{definition}
\newtheorem{defi}[thm]{Definition}
\newtheorem{defs}[thm]{Definitions}
\newtheorem{bascorr}[thm]{The Basic Correspondence}
\newtheorem{ex}[thm]{Example}
\newtheorem{exs}[thm]{Examples}
\newtheorem{ntn}[thm]{Notation}
\newtheorem{rmd}[thm]{Reminder}
\newtheorem{rmds}[thm]{Reminders}
\theoremstyle{remark}
\newtheorem*{note}{Note}
\newtheorem{rmk}[thm]{Remark}
\newtheorem{rmks}[thm]{Remarks}
 \DeclareMathOperator{\ext}{exten}
 \DeclareMathOperator{\ass}{ass}  \DeclareMathOperator{\Max}{Max}
 \DeclareMathOperator{\Spec}{Spec}
 \DeclareMathOperator{\ann}{ann}
\DeclareMathOperator{\grann}{gr-ann} 
\def\Z{\mathbb Z}
\def\N{\mathbb N}
\def\fa{{\mathfrak{a}}}
\def\fA{{\mathfrak{A}}}
\def\fb{{\mathfrak{b}}}
\def\fc{{\mathfrak{c}}}
\def\fd{{\mathfrak{d}}}
\def\fg{{\mathfrak{g}}}
\def\fh{{\mathfrak{h}}}
\def\fm{{\mathfrak{m}}}
\def\fp{{\mathfrak{p}}}
\def\fq{{\mathfrak{q}}}
\def\nn{\relax\ifmmode{\mathbb N_{0}}\else$\mathbb N_{0}$\fi}
\def\lra{\longrightarrow}
\begin{document}



\title[$S$-tight closure
over an $F$-pure local ring]{Tight closure with respect to a
multiplicatively closed subset of an $F$-pure local ring}
\author{RODNEY Y. SHARP}
\address{{\it School of Mathematics and Statistics\\
University of Sheffield\\ Hicks Building\\ Sheffield S3 7RH\\ United
Kingdom}} \email{R.Y.Sharp@sheffield.ac.uk}

\subjclass[2010]{Primary 13A35, 16S36, 13E05, 13E10, 13H05;
Secondary 13J10}

\date{\today}

\keywords{Commutative Noetherian ring, local ring, prime
characteristic, Frobenius homomorphism, tight closure, test element,
big test element, test ideal, big test ideal, excellent ring,
Frobenius skew polynomial ring, $F$-pure ring, complete local ring.}

\begin{abstract}
Let $R$ be a (commutative Noetherian) local ring of prime
characteristic that is $F$-pure. This paper studies a certain finite
set ${\mathcal I}$ of radical ideals of $R$ that is naturally
defined by the injective envelope $E$ of the simple $R$-module. This
set ${\mathcal I}$ contains $0$ and $R$, and is closed under taking
primary components. For a multiplicatively closed subset $S$ of $R$,
the concept of {\em tight closure with respect to $S$\/}, or {\em
$S$-tight closure\/}, is discussed, together with associated
concepts of $S$-test element and $S$-test ideal. It is shown that an
ideal $\fa$ of $R$ belongs to ${\mathcal I}$ if and only if it is
the $S'$-test ideal of $R$ for some multiplicatively closed subset
$S'$ of $R$. When $R$ is complete, ${\mathcal I}$ is also `closed
under taking test ideals', in the following sense: for each proper
ideal $\fc$ in ${\mathcal I}$, it turns out that $R/\fc$ is again
$F$-pure, and if $\fg$ and $\fh$ are the unique ideals of $R$ that
contain $\fc$ and are such that $\fg/\fc$ is the (tight closure)
test ideal of $R/\fc$ and $\fh/\fc$ is the big test ideal of
$R/\fc$, then both $\fg$ and $\fh$ belong to ${\mathcal I}$. The
paper ends with several examples.
\end{abstract}

\maketitle

\setcounter{section}{-1}
\section{\it Introduction}
\label{intro}

This paper is concerned with a (commutative Noetherian) local ring
$R$ having maximal ideal $\fm$ and prime characteristic $p$; the
Frobenius homomorphism $f : R \lra R$, for which $f(r) = r^p$ for
all $r \in R$, will play a central r\^ole. Let us (temporarily) use
$R_f$ to denote the Abelian group $R$ considered as an
$(R,R)$-bimodule with $r_1 \cdot r \cdot r_2 = r_1rr_2^p$ for all
$r, r_1, r_2 \in R$.

The ring $R$ is said to be {\em $F$-pure\/} if and only if, for
every $R$-module $M$, the map $\alpha_M : M \lra R_f\otimes_RM$ for
which $\alpha_M(m) = 1\otimes m$, for all $m \in M$, is injective.
This property can be reformulated in terms of certain left modules
over the skew polynomial ring $R[x,f]$ associated to $R$ and $f$ in
the indeterminate $x$ over $R$, which we refer to as the {\em
Frobenius skew polynomial ring over $R$}. Recall that $R[x,f]$ is,
as a left $R$-module, freely generated by $(x^i)_{i \in \nn}$ (we
use $\nn$ (respectively $\N$) to denote the set of non-negative
(respectively positive) integers),
 and so consists
 of all polynomials $\sum_{i = 0}^n r_i x^i$, where  $n \in \nn$
 and  $r_0,\ldots,r_n \in R$; however, its multiplication is subject to the
 rule
 $
  xr = f(r)x = r^px$ for all $r \in R.$
Note that $R[x,f]$ can be considered as a positively-graded ring
$R[x,f] = \bigoplus_{n=0}^{\infty} R[x,f]_n$, with $R[x,f]_n = Rx^n$
for $n \in \nn$. If we endow $Rx^n$ with its natural structure as an
$(R,R)$-bimodule (inherited from its being a graded component of
$R[x,f]$), then $Rx^n$ is isomorphic (as $(R,R)$-bimodule) to $R$
viewed as a left $R$-module in the natural way and as a right
$R$-module via $f^n$, the $n$th iterate of the Frobenius ring
homomorphism. In particular, $Rx\cong R_f$ as $(R,R)$-bimodules.

A left $R[x,f]$-module $G$ is said to be {\em $x$-torsion-free}
precisely when $xg = 0$, where $g \in G$, implies that $g = 0$. We
can say that $R$ is $F$-pure if and only if, for every $R$-module
$M$, the graded left $R[x,f]$-module
$
R[x,f] \otimes_R M = \bigoplus_{n\in\nn}Rx^n\otimes_R M
$
is $x$-torsion-free. We shall also use the following alternative
characterization of $F$-purity.

\begin{thm} [{\cite[Theorem 3.2]{Fpurhastest}}]
\label{fp.2} The local ring $(R,\fm)$ is $F$-pure if and only if the
$R$-module structure on $E_R(R/\fm)$, the injective envelope of the
simple $R$-module, can be extended to an $x$-torsion-free left
$R[x,f]$-module structure.
\end{thm}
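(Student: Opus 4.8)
\emph{Proof plan.}
To give an $R$-module $M$ a left $R[x,f]$-module structure extending its $R$-structure is the same as to give an additive map $\xi\colon M\to M$ (the action of $x$) with $\xi(rm)=r^{p}\xi(m)$ for all $r\in R$, $m\in M$; equivalently, an $R$-module homomorphism $\theta\colon Rx\otimes_{R}M\to M$, with $\xi(m)=\theta(x\otimes m)$ and $\theta(rx\otimes m)=r\xi(m)$ (recall $Rx=R[x,f]_{1}\cong R_{f}$ as $(R,R)$-bimodules). Under this correspondence the map $\alpha_{M}$ of the Introduction is $m\mapsto x\otimes m$, so $\xi=\theta\circ\alpha_{M}$; and the structure is $x$-torsion-free precisely when $\xi$, i.e.\ $\theta\circ\alpha_{M}$, is injective.

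``$\Leftarrow$''. Suppose $E:=E_{R}(R/\fm)$ carries an $x$-torsion-free $R[x,f]$-structure, given by some $\theta\colon Rx\otimes_{R}E\to E$ with $\theta\circ\alpha_{E}$ injective; then $\alpha_{E}$ is injective. This alone forces $R$ to be $F$-pure. Indeed, $E$ is an injective cogenerator of the category of $R$-modules: given $0\neq m\in M$, the cyclic submodule $Rm\cong R/\ann(m)$ has $R/\fm$ as a quotient, giving a nonzero map $Rm\to R/\fm\hookrightarrow E$, which extends to $\phi\colon M\to E$ by injectivity of $E$. As $\alpha$ is a natural transformation, $(\Id_{R_{f}}\otimes\,\phi)\circ\alpha_{M}=\alpha_{E}\circ\phi$, hence $(\Id\otimes\,\phi)(\alpha_{M}(m))=\alpha_{E}(\phi(m))\neq0$ and so $\alpha_{M}(m)\neq0$. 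Thus $\alpha_{M}$ is injective for every $M$, i.e.\ $R$ is $F$-pure. (In particular this shows $R$ is $F$-pure if and only if $\alpha_{E}$ is injective, a fact used again below.)

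``$\Rightarrow$''. Assume $R$ is $F$-pure. First reduce to the case $R$ complete: $\widehat{R}$ is again $F$-pure (purity of the Frobenius is preserved by completion), $E_{R}(R/\fm)=E_{\widehat{R}}(\widehat{R}/\widehat{\fm})$, and $R[x,f]$ is a graded subring of $\widehat{R}[x,f]$ with the same indeterminate $x$, so any $x$-torsion-free $\widehat{R}[x,f]$-structure on $E$ restricts to an $x$-torsion-free $R[x,f]$-structure. So assume $R$ complete; then $R$ is reduced and excellent, and for complete local rings $F$-purity is equivalent to $F$-splitness, so there is a $p^{-1}$-linear (Cartier) operator $\pi\colon R\to R$ with $\pi(1)=1$ --- equivalently, an $R$-linear splitting of the inclusion $R\hookrightarrow R^{1/p}$. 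It remains to turn $\pi$ into an $x$-torsion-free $R[x,f]$-structure on $E$, which the plan is to do by Matlis duality $(-)^{\vee}=\Hom_{R}(-,E)$: using $E^{\vee}\cong R$ and the Hom--tensor adjunction, the dual of $\pi$ should be identified with an additive $\xi\colon E\to E$ satisfying $\xi(re)=r^{p}\xi(e)$ (equivalently, with an $R$-linear $\theta\colon R_{f}\otimes_{R}E\to E$), and the fact that $\pi$ is a split surjection should translate into injectivity of $\xi$, i.e.\ into the resulting $R[x,f]$-structure being $x$-torsion-free.

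The reformulation, the cogenerator argument for ``$\Leftarrow$'', and the two reductions in ``$\Rightarrow$'' (to the complete case, and then to an $F$-splitting) are the routine parts. \textbf{The main obstacle is the final step of ``$\Rightarrow$'':} manufacturing from the $F$-splitting an \emph{$R$-linear} $\theta\colon R_{f}\otimes_{R}E\to E$ with $\theta\circ\alpha_{E}$ injective. One cannot simply invoke injectivity of $E$: regarding $\alpha_{E}\colon E\to R_{f}\otimes_{R}E$ as an $R$-linear embedding of $E$ into the Frobenius-twist of $R_{f}\otimes_{R}E$, it does split, but only $p^{-1}$-linearly on $R_{f}\otimes_{R}E$, which is not enough; the specific splitting $\pi$ must be fed into the Matlis-dual construction, and verifying that this construction genuinely yields an $x$-torsion-free $R[x,f]$-module structure on $E$ is the heart of the matter. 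When $R$ is $F$-finite this is a direct computation with the Frobenius pushforward $F_{*}$; for a general complete $R$ one either carries the same computation through (exploiting that $R^{1/p}$, though not finitely generated over $R$, remains suitably dualisable against $E$) or first passes, by a faithfully flat base change of $\Gamma$-construction type, to the $F$-finite situation.
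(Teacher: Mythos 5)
Your ``$\Leftarrow$'' direction is correct and clean: the reformulation of an $R[x,f]$-structure extending the $R$-structure as a $p$-linear endomorphism $\xi$, and hence as an $R$-linear $\theta\colon Rx\otimes_R E\to E$ with $\xi=\theta\circ\alpha_E$, is accurate, and the injective-cogenerator/naturality argument then shows cleanly that injectivity of $\alpha_E$ alone forces $F$-purity. This is a perfectly good (and arguably more transparent) route for that implication.

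The ``$\Rightarrow$'' direction, however, is not a proof but a plan, and you are right that the crux lies exactly where you flag it. Three steps need real work. (a) The reduction to the complete case requires that $F$-purity \emph{ascends} to $\widehat R$; descent along the faithfully flat map $R\to\widehat R$ is automatic, but ascent is not, and your parenthetical ``purity of the Frobenius is preserved by completion'' is itself a nontrivial assertion that must be proved or referenced. (b) The passage from $F$-pure to $F$-split over a complete local ring needs a citation (Hochster's result that a pure map out of a complete local ring with finitely generated source splits, or an equivalent); it is true but not free. (c) Most seriously, the Matlis-dual manufacture of an injective $p$-linear $\xi\colon E\to E$ from the splitting $\pi$ is the entire content of the theorem, and you explicitly leave it unverified. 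The obstruction is genuine: $R_f$, as a right $R$-module, is $F_*R$, which need not be finitely generated, so one cannot naively apply the finite-module form of Matlis duality to transport $\pi\colon F_*R\to R$ into an $R$-linear $\theta\colon R_f\otimes_R E\to E$ with $\theta\circ\alpha_E$ injective; and, as you yourself note, injectivity of $\alpha_E$ plus injectivity of $E$ gives a retraction of the wrong linearity. Until (c) is carried out (either by a $\Gamma$-construction reduction to the $F$-finite case, or by a direct limit/inverse limit argument dual to the one proving (b)), the forward implication is unestablished; the theorem's statement in the general (not necessarily $F$-finite, not necessarily excellent) local case makes precisely this step the hard part, and the blind proposal stops short of it.
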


In the paper \cite{ga}, some useful properties of $x$-torsion-free
left $R[x,f]$-modules were developed, and it is appropriate to
recall some of them at this point.

The graded two-sided ideals of $R[x,f]$ are just the subsets of the
form ${\textstyle \bigoplus_{n\in\nn}\fa_nx^n},$ where
$(\fa_n)_{n\in\nn}$ is an ascending sequence of ideals of $R$. (Of
course, such a sequence $\fa_0 \subseteq \fa_1 \subseteq \cdots
\subseteq \fa_n \subseteq \cdots $ is eventually stationary.) Let
$H$ be a left $R[x,f]$-module. An $R[x,f]$-submodule of $H$ is said
to be a {\em special annihilator submodule of $H$\/} if it has the
form
$$\ann_H(\fA) := \{ h \in H : \theta h = 0 \text{~for all~} \theta
\in \fA\}$$ for some {\em graded\/} two-sided ideal $\fA$ of
$R[x,f]$.

We shall use $\mathcal{A}(H)$ to denote the set of special
annihilator submodules of $H$.

The {\em graded annihilator $\grann_{R[x,f]}H$ of $H$\/} is defined
to be the largest graded two-sided ideal of $R[x,f]$ that
annihilates $H$. Note that, if $\grann_{R[x,f]}H =
\bigoplus_{n\in\nn} \fa_nx^n$, then $\fa_0 = (0:_RH)$, which we
shall sometimes refer to as the\/ {\em $R$-annihilator of $H$.}

We shall use $\mathcal{I}(H)$ (or $\mathcal{I}_R(H)$ when it is
desirable to specify the ring $R$) to denote the set of
$R$-annihilators of $R[x,f]$-submodules of $H$.

\begin{bascorr}[{\cite[\S1, \S3]{ga}}]\label{np.2} Let $G$ be an $x$-torsion-free left mo\-d\-ule over $R[x,f]$.

\begin{enumerate} \item By \cite[Lemma 1.9]{ga}, the members of $\mathcal{I}(G)$ are all
radical ideals of $R$; they are referred to as the {\em $G$-special
$R$-ideals\/}; in fact, the graded annihilator of an
$R[x,f]$-submodule $L$ of $G$ is equal to $\bigoplus_{n\in\nn}\fa
x^n$, where $\fa = (0:_RL)$.

\item By \cite[Proposition 1.11]{ga}, there is an order-reversing bijection, $\Theta : \mathcal{A}(G)
\lra \mathcal{I}(G)$ given by
$$
\Theta : N \longmapsto \left(\grann_{R[x,f]}N\right)\cap R =
(0:_RN).
$$
The inverse bijection, $\Theta^{-1} : \mathcal{I}(G) \lra
\mathcal{A}(G),$ also order-reversing, is given by
$$
\Theta^{-1} : \fb \longmapsto \ann_G(\fb R[x,f]).
$$

\item By \cite[Theorem 3.6 and Corollary 3.7]{ga}, the set of $G$-special
$R$-ideals is precisely the set of all finite intersections of prime
$G$-special $R$-ideals (provided one includes the empty
intersection, $R$, which corresponds under the bijection of part
(ii) to the zero special annihilator submodule of $G$). In symbols,
$$
\mathcal{I}(G) = \left\{ \fp_1 \cap \cdots \cap \fp_t : t \in \nn
\mbox{~and~} \fp_1, \ldots, \fp_t \in
\mathcal{I}(G)\cap\Spec(R)\right\}.
$$

\item By \cite[Corollary 3.11]{ga}, when $G$ is Artinian as an
$R$-module, the sets $\mathcal{I}(G)$ and $\mathcal{A}(G)$ are both
finite.
\end{enumerate}
\end{bascorr}

Thus, if $(R,\fm)$ is $F$-pure and we endow $E_R(R/\fm)$ with a
structure as $x$-torsion-free left $R[x,f]$-module (this is
possible, by Theorem \ref{fp.2}), then the resulting set
$\mathcal{I}(E_R(R/\fm))$ is finite. In fact, rather more can be
said.

\begin{thm} [{\cite[Corollary 4.11]{Fpurhastest}}]
\label{in.3} Suppose that $(R,\fm)$ is $F$-pure and local. Then the
left $R[x,f]$-module $R[x,f]\otimes_RE_R(R/\fm)$ is
$x$-torsion-free, and, furthermore, the set ${\mathcal
I}(R[x,f]\otimes_RE_R(R/\fm))$ of
$(R[x,f]\otimes_RE_R(R/\fm))$-special $R$-ideals, is finite.

In fact, for any $x$-torsion-free left $R[x,f]$-module structure on
$E_R(R/\fm)$ that extends its $R$-module structure (and such exist,
by Theorem\/ {\rm \ref{fp.2}}), we have $${\mathcal
I}(R[x,f]\otimes_RE_R(R/\fm)) \subseteq {\mathcal I}(E_R(R/\fm)),$$
and the latter set is finite.
\end{thm}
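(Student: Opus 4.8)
The plan is to exhibit, for a given $x$-torsion-free left $R[x,f]$-module structure on $E := E_R(R/\fm)$, a natural map of left $R[x,f]$-modules from $R[x,f]\otimes_R E$ onto $E$, and to transport special-annihilator information along it. First I would recall that a left $R[x,f]$-module structure on $E$ extending its $R$-module structure is the same thing as an additive map $y : E \lra E$ with $y(re) = r^p y(e)$ for all $r\in R$, $e\in E$ (the action of $x$); equivalently, by the universal property of the bimodule tensor product, a homomorphism of left $R[x,f]$-modules $\mu : R[x,f]\otimes_R E \lra E$ whose restriction to the degree-zero component $Rx^0\otimes_R E \cong E$ is the identity. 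The key point is that $\mu$ is then surjective, since already in degree $0$ it hits all of $E$.

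Next I would invoke the hypothesis, together with Theorem~\ref{in.3}'s companion statement that $R[x,f]\otimes_R E$ is $x$-torsion-free whenever $R$ is $F$-pure (this is exactly the reformulation of $F$-purity recalled before Theorem~\ref{fp.2}, applied to $M = E$, so I may assume it). Write $H := R[x,f]\otimes_R E$. Since $\mu : H \lra E$ is a surjection of left $R[x,f]$-modules, for any $R[x,f]$-submodule $N$ of $H$ its image $\mu(N)$ is an $R[x,f]$-submodule of $E$, and the containment $(0 :_R H) \subseteq (0 :_R N)$ is not quite what I want; rather, I want to compare $R$-annihilators of submodules of $H$ with those of submodules of $E$. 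The clean statement is: for every $R[x,f]$-submodule $N$ of $H$ one has $(0 :_R N) \subseteq (0 :_R \mu(N))$, but to get the reverse containment — which is what yields $\mathcal I(H) \subseteq \mathcal I(E)$ — I would instead argue that the $R$-annihilator of an arbitrary submodule of $H$ is realized as the $R$-annihilator of a submodule of $E$.

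Here is the mechanism I expect to use. Given $\fa \in \mathcal I(H)$, by the Basic Correspondence applied to the $x$-torsion-free module $H$, we have $\fa = (0 :_R L)$ where $L = \ann_H(\fa R[x,f])$ is the corresponding special annihilator submodule. I would then show $\mu$ restricts to a map on special annihilator submodules: because $\mu$ is the identity on the degree-zero part and $E$ sits inside $H$ as that degree-zero part, the composite $E \hookrightarrow H \xrightarrow{\mu} E$ is the identity, so $\mu$ is a retraction and $E$ is (isomorphic to) a direct summand of $H$ \emph{as a left $R[x,f]$-module} — no, $E$ need not be a submodule of $H$ closed under the full action, so I would be more careful: the inclusion $\iota : E \to H$, $e \mapsto 1\otimes e$, is $R$-linear but generally not $R[x,f]$-linear. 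What is true is $\mu\circ\iota = \mathrm{id}_E$. Therefore, for any graded two-sided ideal $\fA = \bigoplus_n \fa x^n$ of $R[x,f]$, one checks $\iota\big(\ann_E(\fA)\big) \subseteq \ann_H(\fA)$ and $\mu\big(\ann_H(\fA)\big) \subseteq \ann_E(\fA)$, the first because $\iota$ is $R$-linear and $\fA$ is generated over $R[x,f]$ by elements $ax^0$ (using part (i) of the Basic Correspondence, $\fA = \fa R[x,f]$ for a single radical ideal $\fa = \fa_0$, since $(0:_R L)$ determines the whole graded annihilator), and the second because $\mu$ is $R[x,f]$-linear. Combining, $\mu$ and $\iota$ restrict to mutually inverse maps between $\ann_H(\fa R[x,f])$ and $\ann_E(\fa R[x,f])$ whenever $\fa$ is an $R$-annihilator, forcing $(0 :_R \ann_H(\fa R[x,f])) = (0 :_R \ann_E(\fa R[x,f]))$; the left side is $\fa$ and the right side lies in $\mathcal I(E)$. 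Hence $\mathcal I(H) \subseteq \mathcal I(E)$, and finiteness of $\mathcal I(E)$ follows from part (iv) of the Basic Correspondence since $E$ is Artinian as an $R$-module, giving both displayed conclusions.

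The main obstacle I anticipate is the third paragraph: the inclusion $\iota : E \to H$ is not a morphism of $R[x,f]$-modules, so one cannot simply push submodules back and forth. The fix is to exploit that a graded two-sided ideal of $R[x,f]$ appearing as a graded annihilator has the special form $\fa R[x,f]$ for a \emph{single} ideal $\fa$ of $R$ (this is exactly part (i) of the Basic Correspondence), so that annihilating $\fA$ is the same as being killed by $\fa x^0$ — an $R$-module condition — together with stability under $x$, which $\ann_H(\fA)$ has automatically and which transports correctly under the $R[x,f]$-linear retraction $\mu$. Verifying that $\iota$ and $\mu$ genuinely invert each other on these special annihilator submodules, and that this forces equality of the $R$-annihilators rather than just a containment, is the delicate bookkeeping step; everything else (surjectivity of $\mu$, Artinian-ness of $E$, appeal to parts (i) and (iv) of the Basic Correspondence) is routine.
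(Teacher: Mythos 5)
Your approach is a genuine departure from the paper's, and unfortunately the central step does not hold up. The paper (citing \cite[Corollary 4.11]{Fpurhastest}, whose mechanism is reproduced here as the Embedding Theorem~\ref{inv.5}) proves $\mathcal I(\Phi(E))\subseteq\mathcal I(E)$ by \emph{embedding} $\Phi(E)$ via a homogeneous $R[x,f]$-monomorphism into a module $K$ assembled from $E$ by shifts, graded products and place-extensions --- operations that leave the set $\mathcal I(\,\cdot\,)$ unchanged. An $R[x,f]$-monomorphism carries an $R[x,f]$-submodule $N$ isomorphically onto its image, so the $R$-annihilator is preserved on the nose, and $\mathcal I(\Phi(E))\subseteq\mathcal I(K)=\mathcal I(E)$ drops out. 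You instead try to transport annihilator information along the \emph{surjection} $\mu:\Phi(E)\to E$, which goes the wrong way: for an $R[x,f]$-submodule $N\subseteq\Phi(E)$ one only gets $(0:_RN)\subseteq(0:_R\mu(N))$, and equality can fail because $\ker\mu$ is large. A surjection of $x$-torsion-free $R[x,f]$-modules in general does \emph{not} give an inclusion of $\mathcal I$'s in your direction.

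The specific step that fails is the claimed containment $\iota\bigl(\ann_E(\fa R[x,f])\bigr)\subseteq\ann_H(\fa R[x,f])$. For $e\in\ann_E(\fa R[x,f])$ and $a\in\fa$, $n\ge 1$, the element $a x^n\otimes e$ of $Rx^n\otimes_R E$ need not vanish: the right $R$-module structure on $Rx^n$ is twisted by $f^n$, so $ax^n\otimes e\ne x^n\otimes ae$, and one cannot move $a$ across the tensor sign. (Concretely, with $R=\mathbb F_p[[t]]$, $E=E_R(R/\fm)$, $e$ a socle generator and $a=t$, one has $ae=0$ but $tx\otimes e\ne 0$ in $Rx\otimes_RE$.) The ``fix'' you propose --- that $\fA=\fa R[x,f]$ is generated by $\fa x^0$ --- does not rescue this, since $\ann_H(\fa R[x,f])=\{h\in H: a\theta h=0 \ \forall a\in\fa,\ \theta\in R[x,f]\}$ is a condition on the whole $R[x,f]$-orbit of $h$, not just on $\fa h$, and $\iota$ does not commute with $x$. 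The claim that $\iota$ and $\mu$ are mutually inverse on the two special annihilator submodules also collapses for a degree reason: $\ann_H(\fa R[x,f])$ is a \emph{graded} submodule of $H$, whereas $\iota(E)$ sits entirely in degree $0$; if they coincided, stability under $x$ together with $x$-torsion-freeness of $H$ would force $\ann_H(\fa R[x,f])=0$. The parts of your write-up that do work --- $x$-torsion-freeness of $\Phi(E)$ from the reformulated definition of $F$-purity, finiteness of $\mathcal I(E)$ from part~(iv) of the Basic Correspondence because $E$ is Artinian, and the one-sided containment $\mu(\ann_H(\fA))\subseteq\ann_E(\fA)$ --- are correct, but they are not enough to close the argument; the missing ingredient is precisely the embedding-theorem machinery.
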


It turns out that, in the situation of Theorem \ref{in.3}, all the
minimal prime ideals of $R$ belong to ${\mathcal
I}(R[x,f]\otimes_RE_R(R/\fm))$, and the smallest ideal of positive
height in ${\mathcal I}(R[x,f]\otimes_RE_R(R/\fm))$ is the big test
ideal of $R$, that is, the ideal generated by all big test elements
for $R$. (A reminder about big test elements is included in the next
section.)

In short, to each $F$-pure local ring $(R,\fm)$ of characteristic
$p$, there is naturally associated a finite set of radical ideals
${\mathcal I}(R[x,f]\otimes_RE_R(R/\fm))$ of $R$, and the smallest
member of positive height in this set has significance for tight
closure theory. The purpose of this paper is to address the
following question: what can be said about the other members of
${\mathcal I}(R[x,f]\otimes_RE_R(R/\fm))$? We shall show that, for
each multiplicatively closed subset $S$ of $R$, one can define
reasonable concepts of $S$-tight closure, $S$-test element and
$S$-test ideal; it turns out that an ideal $\fb$ of $R$ is a member
of ${\mathcal I}(R[x,f]\otimes_RE_R(R/\fm))$ if and only if it is
the $S$-test ideal of $R$ for some choice of multiplicatively closed
subset $S$ of $R$.

We shall also show that, when $R$ is complete, ${\mathcal
I}(R[x,f]\otimes_RE_R(R/\fm))$ is `closed under taking (big) test
ideals', in the following sense: it turns out that, for each proper
ideal $\fc \in {\mathcal I}(R[x,f]\otimes_RE_R(R/\fm))$, the ring
$R/\fc$ is again $F$-pure, and the set ${\mathcal
I}(R[x,f]\otimes_RE_R(R/\fm))$ has among its membership the unique
ideals $\fg$ and $\fh$ of $R$ such that $\fg,\fh \supseteq \fc$ and
$\fg/\fc = \widetilde{\tau}(R/\fc)$, the big test ideal of $R/\fc$,
and $\fh/\fc = \tau(R/\fc)$, the test ideal of $R/\fc$.

I am grateful to Mordechai Katzman for helpful discussions about the
material in this paper.

\section{Internal $S$-tight closure}
\label{st}

\begin{ntn}
\label{st.0} From this point onwards in the paper, $R$ will denote a
commutative Noetherian ring of prime characteristic $p$. We shall
only assume that $R$ is local when this is explicitly stated; then,
the notation `$(R,\fm)$' will denote that $\fm$ is the maximal ideal
of $R$. As in tight closure theory, we use $R^{\circ}$ to denote the
complement in $R$ of the union of the minimal prime ideals of $R$.
The Frobenius homomorphism on $R$ will always be denoted by $f$.

We shall use $\Phi$ (or $\Phi_R$ when it is desirable to specify
which ring is being considered) to denote the functor
$R[x,f]\otimes_R \;{\scriptscriptstyle \bullet}\;$ from the category
of $R$-modules (and all $R$-homomorphisms) to the category of all
$\nn$-graded left $R[x,f]$-modules (and all homogeneous
$R[x,f]$-homomorphisms). For an $R$-module $M$, we shall identify
$\Phi(M)$ with $\bigoplus_{n\in\nn}Rx^n\otimes_RM$, and (sometimes)
identify its $0$th component $R\otimes_RM$ with $M$, in the obvious
ways.

For $n \in \Z$, we shall denote the $n$th component of a $\Z$-graded
module $L$ by $L_n$.

Throughout the paper, $S$ will denote a multiplicatively closed
subset of $R$. (We require that each multiplicatively closed subset
of $R$ contains $1$.)

Also throughout the paper, $H$ will denote a left $R[x,f]$-module
and $G$ will denote an $x$-torsion-free left $R[x,f]$-module.
\end{ntn}

\begin{defs}
\label{st.2} We define the {\em internal $S$-tight closure of zero
in $H$,\/} denoted $\Delta^S(H)$ (or $\Delta^S_{R}(H)$), to be the
$R[x,f]$-submodule of $H$ given by
$$
\Delta^S(H)= \left\{ h \in H : \text{there exists~} s \in S
\text{~with~} sx^nh = 0 \text{~for all~} n \gg 0 \right\}.
$$
Note that if the left $R[x,f]$-module $H$ is $\Z$-graded, then
$\Delta^S(H)$ is a graded submodule.

Let $M$ be an $R$-module, and consider $\Phi(M)$, as in \ref{st.0}.
Now $\Delta^S(\Phi(M))$ is a graded $R[x,f]$-submodule of $\Phi(M)$;
we refer to the $0$th component of $\Delta^S(\Phi(M))$ as the {\em
$S$-tight closure of\/ $0$ in $M$}, or the {\em tight closure with
respect to $S$ of\/ $0$ in $M$}, and denote it by $0^{*,S}_M$ (or by
$0^{*,S}$ when it is clear what $M$ is).

Thus $0^{*,S}_M$ is the set of all elements $m$ of $M$ for which
there exists $s \in S$ such that, for all $n \gg 0$, we have $sx^n
\otimes m = 0$ in $Rx^n \otimes_RM$. In particular,
$0^{*,R^{\circ}}_M$ is the usual tight closure of $0$ in $M$. (See
M. Hochster and C. Huneke \cite[\S8]{HocHun90}.)

Now let $N$ be an $R$-submodule of $M$. The inverse image of
$0^{*,S}_{M/N}$ under the natural epimorphism $M \lra M/N$ is
defined to be the {\em $S$-tight closure of $N$ in $M$}, and is
denoted by $N^{*,S}_M$ or $N^{*,S}$. Thus $N^{*,S}_M$ is the set of
all elements $m$ of $M$ for which there exists $s \in S$ such that,
for all $n \gg 0$, the element $sx^n \otimes m$ of $Rx^n \otimes_RM$
belongs to the image of $Rx^n \otimes_RN$ in $Rx^n \otimes_RM$. Note
that $N^{*,R^{\circ}}_M$ is the usual tight closure of $N$ in $M$.
(See Hochster--Huneke \cite[\S8]{HocHun90}.) I am grateful to the
referee for pointing out that the $S$-tight closure of $N$ in $M$ is
the tight closure of $N$ in $M$ with respect to ${\mathcal C}$ in
the sense of Hochster--Huneke \cite[Definition (10.1)]{HocHun90},
where ${\mathcal C}$ is $\{sR : s \in S\}$, the family of principal
ideals generated by elements of $S$, directed by reverse inclusion
$\supseteq$.

The $S$-tight closure of $\fa$ in $R$ is referred to simply as the
{\em $S$-tight closure of $\fa$} and is denoted by $\fa^{*,S}$. The
fact that there is a homogeneous $R[x,f]$-isomorphism
$$R[x,f]\otimes_R(R/\fa) \cong \bigoplus_{n\in\nn}R/\fa^{[p^n]}$$
(where the right-hand side has the left $R[x,f]$-module structure
for which $x(r + \fa^{[p^n]}) = r^p + \fa^{[p^{n+1}]}$ for all $r
\in R$) enables one to conclude that
$$
\fa^{*,S} = \left\{ r \in R : \mbox{there exists~} s \in S
\mbox{~with~} sr^{p^n} \in \fa^{[p^n]} \mbox{~for all~} n \gg 0
\right\}.
$$
Thus $\fa^{*,R^{\circ}}$ is the usual tight closure $\fa^*$ of
$\fa$. (See Hochster--Huneke \cite[\S3]{HocHun90}.)
\end{defs}

\begin{exs}
\label{st.3} In the situation of Definition \ref{st.2}, let $S$, $T$
be multiplicatively closed subsets of $R$ with $S \subseteq T$. Then
clearly $\Delta^S(H) \subseteq \Delta^T(H)$.

\begin{enumerate}
\item Note that $\{1\}$ is a multiplicatively closed subset of $R$,
and
\begin{align*}
\Delta^{\{1\}}(H) & = \left\{ h \in H : x^nh = 0 \text{~for all~} n
\gg 0 \right\} \\ &= \{h \in H : \text{there exists~} n \in \nn
\text{~with~} x^nh = 0 \},
\end{align*}
the {\em $x$-torsion submodule\/} $\Gamma_x(H)$ of $H$. Thus
$\Gamma_x(H) \subseteq \Delta^S(H)$ and therefore $(0
:_R\Delta^S(H)) \subseteq (0 :_R \Gamma_x(H))$.
\item Let $M$ be an $R$-module and let $h,n \in \nn$. Endow
$Rx^n$ and $Rx^h$ with their natural structures as $(R,R)$-bimodules
(inherited from their being graded components of $R[x,f]$). Then
there is an isomorphism of (left) $R$-modules $\phi :
Rx^{n+h}\otimes_R M \stackrel{\cong}{\lra}
Rx^{n}\otimes_R(Rx^{h}\otimes_R M)$ for which $\phi(rx^{n+h} \otimes
m) = rx^n \otimes (x^h \otimes m)$ for all $r \in R$ and $m \in M$.

One can use isomorphisms like that described in the above paragraph
to see that
$$\Delta^S(R[x,f]\otimes_RM) = 0^{*,S}_M \oplus 0^{*,S}_{Rx
\otimes_RM} \oplus \cdots \oplus 0^{*,S}_{Rx^n \otimes_RM}\oplus
\cdots.
$$
\end{enumerate}
\end{exs}

\begin{ntn}
\label{st.1} We define $\mathcal{M}^S(G)$, for the $x$-torsion-free
left $R[x,f]$-module $G$, to be the set of minimal members (with
respect to inclusion) of the set $$ \left\{ \fp \in \Spec(R) \cap
\mathcal{I}(G) : \fp \cap S \neq \emptyset\right\} $$ of prime
$G$-special $R$-ideals that meet $S$.

When $\mathcal{M}^S(G)$ is finite, we shall set $\fb := \bigcap_{\fp
\in \mathcal{M}^S(G)}\fp$, although we shall write $\fb^{S,G}$ for
$\fb$ when it is desirable to indicate $S$ and $G$; in that case, it
follows from \ref{np.2}(iii) that $\fb$ is the smallest member of
$\mathcal{I}(G)$ that meets $S$ (and, in particular, $\fb$ is
contained in every other member of $\mathcal{I}(G)$ that meets $S$).
(In the special case where $\mathcal{M}^S(G) = \emptyset$, we
interpret $\fb$ as $R$, the intersection of the empty family of
prime ideals of $R$.)
\end{ntn}

\begin{prop}
\label{st.5} Consider the $x$-torsion-free left $R[x,f]$-module $G$.
The set $\mathcal{M}^S(G)$ (see\/ {\rm \ref{st.1}}) is finite if and
only if $(0:_R\Delta^S(G)) \cap S \neq \emptyset$.

When these conditions are satisfied, and $\fb$ denotes the
intersection of the prime ideals in the finite set
$\mathcal{M}^S(G)$, then $$\Delta^S(G) = \ann_G(\fb R[x,f]) \quad
\mbox{~and~} \quad (0:_R\Delta^S(G)) = \fb.$$
\end{prop}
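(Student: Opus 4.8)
The plan is to exploit the Basic Correspondence \ref{np.2} together with the structural description in Examples \ref{st.3}(ii). First I would observe that $\Delta^S(G)$ is always an $R[x,f]$-submodule of $G$ (this is clear from its definition, since if $sx^nh=0$ for $n\gg0$ then $sx^n(x h)=0$ for $n\gg0$, and similarly for $R$-scalar multiples and sums, using a common $s$). Hence by \ref{np.2}(i) its graded annihilator is $\bigoplus_{n\in\nn}\fa x^n$ where $\fa=(0:_R\Delta^S(G))$, and $\fa\in\mathcal{I}(G)$. Moreover, again by \ref{np.2}(i)–(ii), $\Delta^S(G)$ is itself a special annihilator submodule: $\Delta^S(G)=\ann_G(\fa R[x,f])$. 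So the content of the proposition is the identification $\fa=\fb$ under the stated finiteness/nonemptiness conditions.

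Next I would handle the equivalence of the two conditions and, simultaneously, the identification of $\fb$ with $\fa$, by analyzing which primes $\fp\in\Spec(R)\cap\mathcal{I}(G)$ satisfy $\fp\supseteq\fa$. The key local computation: for $h\in G$, one has $h\in\Delta^S(G)$ iff there is $s\in S$ with $sx^nh=0$ for all $n\gg0$. Since $\fa=(0:_R\Delta^S(G))$, the submodule $\Delta^S(G)$ is killed by $\fa$, so $\Delta^S(G)\subseteq\ann_G(\fa R[x,f])$; conversely any element killed by $\fa R[x,f]$ is killed by $\fa$, and I need to see such an element lies in $\Delta^S(G)$ exactly when $\fa$ meets $S$. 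Here the plan is: if there exists $s\in\fa\cap S$, then $s\Delta^S(G)=0$, and I would like to upgrade this to $sx^n\ann_G(\fa R[x,f])=0$ for $n\gg0$ — but more cleanly, I would show directly that $\ann_G(\fa R[x,f])\subseteq\Delta^S(G)$ whenever $\fa\cap S\neq\emptyset$: if $s\in\fa\cap S$ and $g\in\ann_G(\fa R[x,f])$, then $sx^n$ annihilates $g$ for every $n\in\nn$ (since $sx^n\in\fa R[x,f]$), so $g\in\Delta^S(G)$. Combined with the reverse inclusion already noted, this gives $\Delta^S(G)=\ann_G(\fa R[x,f])$ and, via the bijection $\Theta$ of \ref{np.2}(ii), that $\fa$ is genuinely the $R$-annihilator of this special annihilator submodule. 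Thus the real dichotomy is whether $\fa\cap S=\emptyset$ or not, i.e. whether $(0:_R\Delta^S(G))\cap S\neq\emptyset$.

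Now I would connect this with $\mathcal{M}^S(G)$ and the finiteness claim. Using \ref{np.2}(iii), every member of $\mathcal{I}(G)$ is a finite intersection of prime members; write $\fa=\fp_1\cap\cdots\cap\fp_t$ with the $\fp_i$ prime and in $\mathcal{I}(G)$, chosen irredundantly. Then $\fa\cap S\neq\emptyset$ iff some $s\in S$ avoids no $\fp_i$... — more carefully, $\fa\cap S\neq\emptyset$ iff there is $s\in S$ with $s\in\fp_i$ for all $i$, which forces each $\fp_i$ to meet $S$; conversely if each $\fp_i$ meets $S$ pick $s_i\in\fp_i\cap S$ and then $s_1\cdots s_t\in\fa\cap S$. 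So $\fa\cap S\neq\emptyset$ iff every prime in an irredundant primary-type decomposition of $\fa$ meets $S$, and in that case the primes meeting $S$ in $\mathcal{I}(G)$ that are minimal are exactly $\fp_1,\dots,\fp_t$, a finite set, whence $\mathcal{M}^S(G)$ is finite and $\fb=\fp_1\cap\cdots\cap\fp_t=\fa$. Conversely, if $\fa\cap S=\emptyset$ then $\fa$ is contained in some prime not meeting $S$; I would need to argue that in this case $\mathcal{M}^S(G)$ is infinite — this I expect to be the main obstacle, and I anticipate handling it by showing that $\fa\cap S=\emptyset$ is incompatible with $\mathcal{M}^S(G)$ being finite: if $\mathcal{M}^S(G)=\{\fq_1,\dots,\fq_r\}$ were finite, set $\fb_0=\fq_1\cap\cdots\cap\fq_r\in\mathcal{I}(G)$ by \ref{np.2}(iii), which meets $S$; but one should then be able to show $\fb_0=(0:_R\Delta^S(G))=\fa$, forcing $\fa\cap S\neq\emptyset$, a contradiction. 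To see $\fb_0\supseteq\fa$ I would show $\fb_0$ annihilates $\Delta^S(G)$ (equivalently $\Delta^S(G)\subseteq\ann_G(\fb_0 R[x,f])$): given $h\in\Delta^S(G)$ with witness $s\in S$, the ideal $(0:_R\{x^nh:n\gg0\})$ meets $S$, lies in $\mathcal{I}$ after suitable interpretation, hence its minimal primes meeting $S$ lie among the $\fq_j$, giving $\fb_0\subseteq$ that annihilator and so $\fb_0 h$ is $x$-power-annihilated, then $x$-torsion-freeness of $G$ kills it outright — so $\fb_0 h=0$. The reverse inclusion $\fa\subseteq\fb_0$ is immediate since each $\fq_j\in\mathcal{I}(G)$ meets $S$ and $\fa\cap S\neq\emptyset$ would then follow from... — here I must be careful, but the cleanest route is: once $\fb_0$ annihilates $\Delta^S(G)$ we get $\fb_0\subseteq\fa$ is false in general, rather $\fa=(0:_R\Delta^S(G))\supseteq\fb_0$; combined with $\fb_0$ being an intersection of primes each containing... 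I would instead directly show $\ann_G(\fb_0 R[x,f])\subseteq\Delta^S(G)$ since $\fb_0$ meets $S$ (by the argument of the previous paragraph applied to $\fb_0$), giving $\ann_G(\fb_0 R[x,f])=\Delta^S(G)$ and hence $\fb_0=\Theta(\Delta^S(G))=\fa$ by \ref{np.2}(ii). That pins down $\fa=\fb_0$, meets $S$, contradiction — completing both the equivalence and, in the affirmative case, the identifications $\Delta^S(G)=\ann_G(\fb R[x,f])$ and $(0:_R\Delta^S(G))=\fb$.
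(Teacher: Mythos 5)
Your proof is correct and follows essentially the same route as the paper's: the direction starting from $(0:_R\Delta^S(G))\cap S\neq\emptyset$ decomposes the radical ideal $(0:_R\Delta^S(G))\in\mathcal{I}(G)$ into its prime components (all of which lie in $\mathcal{I}(G)$ and meet $S$) and identifies them with $\mathcal{M}^S(G)$, while the converse hinges on the same key device of considering, for $h\in\Delta^S(G)$ with witness $s\in S$, the $R$-annihilator of the $R[x,f]$-submodule generated by $x^{n_0}h$, which belongs to $\mathcal{I}(G)$, contains $s$, and hence contains $\fb$. The only organizational differences are that you run the second direction as a contradiction argument (invoking $x$-torsion-freeness of $G$ at the end) rather than proving $\Delta^S(G)\subseteq\ann_G(\fb R[x,f])$ directly, and that your mid-proof wavering over whether $\fb_0\subseteq\fa$ or $\fa\subseteq\fb_0$ is self-corrected before the conclusion.
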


\begin{proof} ($\Leftarrow$) Set $\fc := (0:_R\Delta^S(G))$ and assume
that there exists $s \in \fc \cap S$. Since $G$ is $x$-torsion-free
and $\Delta^S(G)$ is an $R[x,f]$-submodule of $G$, it follows from
\ref{np.2}(i) that $\fc$ is radical and $\grann_{R[x,f]}\Delta^S(G)
= \fc R[x,f]$. Now
$$
\ann_G(\fc R[x,f]) \subseteq \ann_G((sR) R[x,f]) \subseteq
\Delta^S(G) \subseteq \ann_G(\fc R[x,f]),
$$
and so $\ann_{G}(\fc R[x,f]) = \Delta^S(G)$. Note that $\fc \in
\mathcal{I}(G)$.

If $\fc = R$, then $\Delta^S(G) = 0$, so that $\mathcal {M}^S(G)$ is
empty because a $\fp \in \mathcal{I}(G)\cap\Spec(R)$ with $ \fp\cap
S \neq \emptyset$ must satisfy $\ann_G(\fp R[x,f]) \subseteq
\Delta^S(G) = 0$, and this leads to a contradiction to
\ref{np.2}(ii). We therefore suppose that $\fc \neq R$.

Let $\fc = \fp_1 \cap \cdots \cap \fp_t$ be the minimal primary
decomposition of the (radical) ideal $\fc$. By \cite[Theorem 3.6 and
Corollary 3.7]{ga}, the prime ideals $\fp_1, \ldots, \fp_t$ all
belong to $\mathcal{I}(G)$; they all meet $S$. Since $\Spec (R)$
satisfies the descending chain condition, each member of $\{\fp' \in
\Spec (R)\cap\mathcal{I}(G) : \fp' \cap S \neq \emptyset\}$ contains
a member of $\mathcal{M}^S(G)$. In particular, each of $\fp_1,
\ldots, \fp_t$ contains a member of $\mathcal{M}^S(G)$.

Now let $\fp \in \mathcal{M}^S(G)$. Since $\fp$ meets $S$, we must
have that $\ann_{G}(\fp R[x,f]) \subseteq \Delta^S(G) = \ann_{G}(\fc
R[x,f])$. It now follows from the inclusion-reversing bijective
correspondence of \ref{np.2}(ii) that $\fp \supseteq \fc$, so that
$\fp$ contains one of $\fp_1, \ldots, \fp_t$. We can therefore
conclude that $\fp_1, \ldots, \fp_t$ are precisely the minimal
members of $$\{\fp \in \mathcal{I}(G)\cap\Spec (R) : \fp \cap S \neq
\emptyset\}.$$ Therefore $\mathcal{M}^S(G)$ is finite.

($\Rightarrow$) Assume that $\mathcal{M}^S(G)$ is finite. Then $\fb$
is the smallest member of ${\mathcal I}(G)$ that meets $S$.
Consequently, $\ann_G(\fb R[x,f]) \subseteq \Delta^S(G)$. Let $g \in
\Delta^S(G)$, so that there exist $s' \in S$ and $n_0 \in\nn$ such
that $s'x^ng = 0$ for all $n \geq n_0$. Thus $g \in
\ann_G(\bigoplus_{n\geq n_0}Rs'x^n) =: J$, a special annihilator
submodule of $G$. Let $\fb'$ be the $G$-special $R$-ideal that
corresponds to this special annihilator submodule (in the bijective
correspondence of \ref{np.2}(ii)). Since $\bigoplus_{n\geq
n_0}Rs'x^n \subseteq \grann_{R[x,f]}J = \fb' R[x,f]$, we have $s'
\in \fb'$, so that $\fb'\cap S \neq \emptyset$. Therefore $\fb'
\supseteq \fb$, and $g \in J = \ann_G(\fb' R[x,f]) \subseteq
\ann_G(\fb R[x,f]). $ Therefore $\Delta^S(G) \subseteq \ann_G(\fb
R[x,f])$. We conclude that $\Delta^S(G) = \ann_G(\fb R[x,f])$, and
that $(0:_R\Delta^S(G)) = (0:_R\ann_G(\fb R[x,f])) = \fb$. All the
claims in the statement of the proposition have now been proved.
\end{proof}

\begin{defi}
\label{st.8} An {\em $S$-test element for $R$} is an element $s \in
S$ such that, for every $R$-module $M$ and every $j \in \nn$, the
element $sx^j$ annihilates $1 \otimes m \in (\Phi(M))_0$ for every
$m \in 0^{*,S}_M$. The ideal of $R$ generated by all the $S$-test
elements for $R$ is called the {\em $S$-test ideal of $R$}, and
denoted by $\tau^S(R)$.
\end{defi}

One of the aims of this paper is to show that $S$-test elements for
$R$ exist when $R$ is $F$-pure and local.

This is a suitable point at which to remind the reader of some of
the classical concepts related to tight closure test elements.

\begin{rmd}
\label{inv.5p} Recall that a {\em test element for modules\/} for
$R$ is an element $c \in R^{\circ}$ such that, for every finitely
generated $R$-module $M$ and every $j \in \nn$, the element $cx^j$
annihilates $1 \otimes m \in (\Phi(M))_0$ for every $m \in 0^*_M$.
The phrase `for modules' is inserted because Hochster and Huneke
have also considered a concept of a {\em test element for ideals}
for $R$, which is defined to be an element $c \in R^{\circ}$ such
that, for every {\em cyclic\/} $R$-module $M$ and every $j \in \nn$,
the element $cx^j$ annihilates $1 \otimes m \in (\Phi(M))_0$ for
every $m \in 0^*_M$. When $R$ is reduced and excellent, the concepts
of test element for modules and test element for ideals for $R$
coincide: see \cite[Discussion (8.6) and Proposition
(8.15)]{HocHun90}.

Hochster and Huneke define the {\em test ideal $\tau(R)$ of $R$\/}
to be $\bigcap_M(0:_R0^*_M)$, where the intersection is taken over
all finitely generated $R$-modules $M$. In \cite[Proposition
(8.23)(b)]{HocHun90} they show that, if $R$ has a test element for
modules, then $\tau(R)$ is the ideal generated by the test elements
for modules, and $\tau(R)\cap R^{\circ}$ is the set of test elements
for modules.
\end{rmd}

\begin{rmd}
\label{inv.5r} Recall also that a {\em big test element\/} for $R$
is an element $c \in R^{\circ}$ such that, for {\em every\/}
$R$-module $M$ and every $j \in \nn$, the element $cx^j$ annihilates
$1 \otimes m \in (\Phi(M))_0$ for every $m \in 0^*_M$. We let
$\widetilde{\tau}(R)$ denote the ideal generated by all big test
elements for $R$, and call this the {\em big test ideal of $R$}.

Note that an $R^{\circ}$-test element for $R$, in the sense of
Definition \ref{st.8}, is just a big test element for $R$.
\end{rmd}

Recall that an {\em injective cogenerator of $R$\/} is an injective
$R$-module $E$ with the property that, for every $R$-module $M$ and
every non-zero element $m \in M$, there exists an $R$-homomorphism
$\phi : M \lra E$ such that $\phi(m) \neq 0$. As $R$ is Noetherian,
$\bigoplus_{\fm \in \Max(R)} E_R(R/\fm)$, where $\Max(R)$ denotes
the set of maximal ideals of $R$, is one injective cogenerator of
$R$.

\begin{rmds}
\label{inv.5q} Let $E := \bigoplus_{\fm \in \Max(R)} E_R(R/\fm)$, an
injective cogenerator of $R$.

\begin{enumerate}
\item Recall from Hochster--Huneke \cite[Definition (8.19)]{HocHun90} that
the {\em finitistic tight closure of\/ $0$ in $E$,\/} denoted by
$0^{*{\text f}{\text g}}_E$, is defined to be $\bigcup_{M}0^*_M$,
where the union is taken over all finitely generated $R$-submodules
$M$ of $E$. It was shown in \cite[Proposition (8.23)(d)]{HocHun90}
that $\tau (R) = (0:_R0^{*{\text f}{\text g}}_E)$.

\item It is conjectured that $(0:_R0^{*{\text f}{\text g}}_E) =
(0:_R0^*_E)$. This conjecture is known to be true
\begin{enumerate}
\item if $R$ is an excellent Gorenstein local ring (see K. E. Smith
\cite[p.\ 48]{Smith94});
\item if $R$ is the localization of a finitely generated $\nn$-graded
algebra over an $F$-finite field $K$ (of characteristic $p$ and
having $K$ as its component of degree $0$) at its unique homogeneous
maximal ideal (see G. Lyubeznik and K. E. Smith \cite[Corollary
3.4]{LyuSmi99});
\item if $R$ is a Cohen--Macaulay local ring which is Gorenstein on
its punctured spectrum (see Lyubeznik--Smith \cite[Theorem
8.8]{LyuSmi01}); or
\item if $(R, \fm)$ is local and an isolated singularity (see Lyubeznik--Smith
\cite[Theorem 8.12]{LyuSmi01}).
\end{enumerate}

\item It was shown in \cite[Theorem 3.3]{btctefsnrer}
that if $R$ has a big test element, then the big test ideal
$\widetilde{\tau}(R)$ of $R$ is equal to
$(0:_R\Delta^{R^{\circ}}(\Phi(E)))$, and the set of big test
elements for $R$ is $(0:_R\Delta^{R^{\circ}}(\Phi(E)))\cap
R^{\circ}$.
\end{enumerate}
\end{rmds}

\section{\it Existence of $S$-test elements in $F$-pure local rings}
\label{inv}

Theorem \ref{in.3} was proved by means of an `embedding theorem'
\cite[Theorem 4.10]{Fpurhastest}. Similar embedding theorems were
established in \cite[Theorem 3.5]{gatcti} and \cite[Theorem
3.2]{btctefsnrer}. In this paper, the ideas underlying those
theorems are going to be pursued further, and so we begin with three
remarks and a lemma that can be viewed as addenda to \cite[\S
2]{btctefsnrer}.

The notation in this section is as described in \ref{st.0}.

\begin{rmk}
\label{inv.1} Let $\widetilde{H}$ denote the graded companion of $H$
described in \cite[Example 2.1]{btctefsnrer}. It is easy to check
that $\Delta^S(\widetilde{H}) = \widetilde{\Delta^S(H)}$, so that
$(0:_R\Delta^S(\widetilde{H})) = (0:_R\Delta^S(H))$.
\end{rmk}

\begin{rmk}
\label{inv.2} Let $(H^{(\lambda)})_{\lambda\in\Lambda}$ be a
non-empty family of $\Z$-graded left $R[x,f]$-modules, and consider
the graded product $\prod^{\prime}_{\lambda\in\Lambda}H^{(\lambda)}$
of the $H^{(\lambda)}$, described in \cite[Lemma 2.1]{gatcti} and
\cite[Example 2.2]{btctefsnrer}. It is routine to check that, if
there is an ideal $\fd_0$ of $R$ such that
$(0:_R\Delta^S(H^{(\lambda)})) = \fd_0$ for all $\lambda\in\Lambda$,
then $\left( 0:_R
\Delta^S\left(\prod^{\prime}_{\lambda\in\Lambda}H^{(\lambda)}\right)\right)
= \fd_0$.
\end{rmk}

\begin{rmk}
\label{inv.3} Assume that the left $R[x,f]$-module $H =
\bigoplus_{n\in\Z}H_n$ is $\Z$-graded; let $t \in \Z$. Denote by
$H(t)$ the result of application of the $t$th shift functor to $H$;
this is described in \cite[Example 2.3]{btctefsnrer}. It is clear
that $\Delta^S(H(t)) = \Delta^S(H)(t)$, so that
$(0:_R\Delta^S(H(t))) = (0:_R\Delta^S(H))$.
\end{rmk}

\begin{lem}
\label{inv.4} Let $b, h \in \N$ and let $W := \bigoplus_{n \geq b}
W_n$ be a graded left $R[x,f]$-module. Let $(g_i)_{i\in I}$ be a
family of arbitrary elements of $W_b$. Consider the $h$-place
extension $\ext(W;(g_i)_{i\in I};h)$ of $W$ by $(g_i)_{i\in I}$,
defined in \cite[Definition 4.5]{Fpurhastest} and
\cite[\S2]{btctefsnrer}. Then $ (0:_R\Delta^S(\ext(W;(g_i)_{i\in
I};h))) = (0:_R\Delta^S(W)). $
\end{lem}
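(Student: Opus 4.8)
The plan is to analyze the structure of the $h$-place extension $\ext(W;(g_i)_{i\in I};h)$ directly from its construction in \cite[Definition 4.5]{Fpurhastest}. Recall that this extension is a graded left $R[x,f]$-module $V = \bigoplus_{n\geq b}V_n$ containing $W$ as a graded submodule, where $V_n = W_n$ for $n \geq b+h$, and the new components in degrees $b, b+1, \dots, b+h-1$ are built so that each $g_i$ acquires an `$h$th root' under the $x$-action while the quotient $V/W$ is annihilated by $x^h$ (so $V/W$ is $x$-torsion, concentrated in low degrees). The two inclusions $(0:_R\Delta^S(W)) \supseteq (0:_R\Delta^S(V))$ and the reverse will be proved separately.

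First I would establish $(0:_R\Delta^S(V)) \subseteq (0:_R\Delta^S(W))$. Since $W$ is a graded $R[x,f]$-submodule of $V$, we have $\Delta^S(W) = \Delta^S(V) \cap W$ directly from the definition in \ref{st.2}: an element $w \in W$ satisfies $sx^nw = 0$ for all $n \gg 0$ (computed in $W$) if and only if the same holds computed in $V$. Hence $\Delta^S(W) \subseteq \Delta^S(V)$, giving $(0:_R\Delta^S(V)) \subseteq (0:_R\Delta^S(W))$ at once.

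The substantive direction is $(0:_R\Delta^S(W)) \subseteq (0:_R\Delta^S(V))$; equivalently, $r\in R$ with $r\Delta^S(W)=0$ forces $r\Delta^S(V)=0$. Let $v \in \Delta^S(V)$, so $sx^nv = 0$ for all $n\geq n_0$, for some $s\in S$. The key point is that $x^h v \in W$, because $V/W$ is annihilated by $x^h$. I claim $x^h v \in \Delta^S(W)$: indeed $sx^n(x^hv) = sx^{n+h}v = 0$ for $n+h \geq n_0$, i.e.\ for all $n \gg 0$, and $x^hv \in W$, so this is membership in $\Delta^S(W)$ by the graded description above. Therefore $r x^h v = 0$. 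It remains to pass from $rx^hv=0$ to $rv=0$; this is where I expect the main obstacle, and where the precise construction of the extension must be used. The point is that the new elements of $V$ in degrees $b,\dots,b+h-1$ are, by design, not $x^h$-torsion `for free': one shows using the defining relations of $\ext(W;(g_i)_{i\in I};h)$ that multiplication by $x^h$ followed by $r$ being zero on a homogeneous component in degree $<b+h$ already forces $r$ to kill that component — essentially because the $h$th roots were adjoined freely subject only to $x^h(\text{new generator})=g_i$, so an $R$-linear relation pulled back through $x^h$ reflects an honest $R$-linear relation on the generators, hence on $g_i\in W_b$, hence (after applying $x^n$ and using $v\in\Delta^S(V)$, reducing to the already-handled behaviour of $W$) is controlled by $(0:_R\Delta^S(W))$. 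For components of $v$ in degree $\geq b+h$ there is nothing to prove, since there $V$ agrees with $W$. Splitting $v$ into its homogeneous pieces (legitimate since $\Delta^S(V)$ is a graded submodule, as noted in \ref{st.2}) reduces everything to these two cases, and assembling them yields $rv=0$, completing the proof. I would phrase the low-degree argument by an explicit induction on the degree, descending from $b+h-1$ down to $b$, at each stage using $x(V_n)\subseteq V_{n+1}$ together with the facts just proved for higher degrees.
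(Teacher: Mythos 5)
Your overall strategy is the right one, and you have correctly reduced the problem to its essential core: the ``easy'' inclusion $(0:_R\Delta^S(V))\subseteq(0:_R\Delta^S(W))$ (where $V:=\ext(W;(g_i);h)$) follows from $\Delta^S(W)=W\cap\Delta^S(V)$, and for the reverse inclusion one works with homogeneous components (legitimate since $\Delta^S$ of a $\Z$-graded module is graded), noting that the only degrees needing new work are those below $b$ where $V$ differs from $W$, and that applying a suitable power of $x$ sends a homogeneous element of $\Delta^S(V)$ in low degree into $\Delta^S(W)$.

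However, the step you yourself flag as ``the main obstacle'' --- passing from $r\,x^{b-n}v_n=0$ to $r\,v_n=0$ on a new component $V_n$ (and note the new components sit in degrees $b-h,\dots,b-1$, not $b,\dots,b+h-1$ as you wrote) --- is not actually established, and the heuristic you offer is misleading. If the new generators really had been ``adjoined freely'' so that $V_n$ were a free $R$-module on symbols $e_{i,n}$ with $x^{b-n}e_{i,n}=g_i$, the implication would simply be false: $r\,x^{b-n}v_n=r\sum_i r_i^{p^{b-n}}g_i=0$ does not force $r r_i=0$ for each $i$ (take some $g_i=0$ and the hypothesis becomes vacuous). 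What actually makes the argument work is a combination of two facts you have not invoked. First, the construction of the $h$-place extension is arranged so that $x^{b-n}$ is \emph{injective} on $V_n$ for $b-h\le n<b$; the new components are not free modules but, up to the evident identification, images in $W_b$ of the $p^{b-n}$-semilinear map determined by $(g_i)_{i\in I}$, so any element killed by $x^{b-n}$ is already zero. Second, one must use the Frobenius semilinearity of the $x$-action: from $r\,x^{b-n}v_n=0$ one gets $x^{b-n}(r v_n)=r^{p^{b-n}}x^{b-n}v_n=r^{p^{b-n}-1}\bigl(r\,x^{b-n}v_n\bigr)=0$, and then injectivity of $x^{b-n}$ on $V_n$ yields $r v_n=0$. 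Without both of these ingredients the passage cannot be made, so as it stands there is a genuine gap at precisely the point where the structure of $\ext(W;(g_i)_{i\in I};h)$ must be used. (For comparison, the paper itself does not spell out the argument: it simply defers to \cite[Proposition 2.4(i)]{btctefsnrer}, noting that the proof given there for $S=R^{\circ}$ goes through verbatim for an arbitrary multiplicatively closed $S$.)
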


\begin{proof} This can be proved by making obvious modifications to
the proof, presented in \cite[Proposition 2.4(i)]{btctefsnrer}, that
$(0:_R\Delta^{R^{\circ}}(\ext(W;(g_i)_{i\in I};h))) =
(0:_R\Delta^{R^{\circ}}(W))$.
\end{proof}

The above remarks and lemma are helpful for use in the proof of some
of the claims in the following Embedding Theorem.

\begin{embthm}
\label{inv.5} {\rm (See \cite[Theorem 3.2]{btctefsnrer}.)} Let $E$
be an injective cogenerator of $R$. Assume that there exists an
$\nn$-graded left $R[x,f]$-module $H = \bigoplus_{n\in\nn}H_n$ such
that $H_0$ is $R$-isomorphic to $E$.

Let $M$ be an $R$-module. Then there is a family
$\left(L^{(n)}\right)_{n \in \nn}$ of\/ $\nn$-graded left
$R[x,f]$-modules, where $L^{(n)}$ is an $n$-place extension of the
$-n$th shift of a graded product of copies of $H$ (for each $n \in
\nn$), for which there exists a homogeneous $R[x,f]$-monomorphism
\[
\nu : \Phi(M) = \bigoplus_{i\in \nn}(Rx^i\otimes_RM) \lra
\prod_{n\in\nn}{\textstyle ^{^{^{\!\!\Large \prime}}}} L^{(n)} =: K.
\]
Consequently, $(0:_R\Delta^S(H)) = (0:_R\Delta^S(K)) \subseteq
(0:_R\Delta^S(\Phi(M)))$.

Furthermore, if $H$ is $x$-torsion-free, then so too is $\Phi(M)$,
and ${\mathcal I}(\Phi(M)) \subseteq {\mathcal I}(H)$ and
$(0:_R\Delta^S(\Phi(M))) \in {\mathcal I}(H)$.
\end{embthm}

\begin{proof} The existence of $K$ and $\nu$ with the stated properties were proved in
\cite[Theorem 3.2]{btctefsnrer}.

The existence of the $R[x,f]$-monomorphism $\nu$ shows that
$$
(0:_R\Delta^S(K)) \subseteq (0:_R\Delta^S(\Phi(M))),
$$
while  Remarks \ref{inv.2} and \ref{inv.3} and Lemma \ref{inv.4}
show that $(0:_R\Delta^S(K)) = (0:_R\Delta^S(H))$.

Now suppose $H$ is $x$-torsion-free. It follows from \cite[Lemmas
2.3 and 2.8]{gatcti} that $K$ is $x$-torsion-free and that
${\mathcal I}(K) = {\mathcal I}(H)$. The existence of the
$R[x,f]$-monomorphism $\nu$ shows that $\Phi(M)$ is $x$-torsion-free
and $R[x,f]$-isomorphic to an $R[x,f]$-submodule of $K$; therefore
${\mathcal I}(\Phi(M)) \subseteq {\mathcal I}(K) = {\mathcal I}(H)$.
Finally, $(0:_R\Delta^S(\Phi(M))) \in {\mathcal I}(\Phi(M))$.
\end{proof}

\begin{thm}
\label{fp.4} Suppose that the local ring $(R,\fm)$ is $F$-pure. Then
$R$ has an $S$-test element.

In more detail, set $E := E_R(R/\fm)$. Recall that $\fb^{S,\Phi(E)}$
denotes the intersection of all the minimal members of the set
$\left\{ \fp \in \Spec(R) \cap \mathcal{I}(\Phi(E)) : \fp \cap S
\neq \emptyset\right\} $ (see {\rm \ref{st.1}}). Then $S \cap
\fb^{S,\Phi(E)}$ is (non-empty and) equal to the set of $S$-test
elements for $R$. Furthermore, $\fb^{S,\Phi(E)} =
(0:_R\Delta^S(\Phi(E)))$.
\end{thm}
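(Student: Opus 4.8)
The plan is to combine the Embedding Theorem~\ref{inv.5} with Proposition~\ref{st.5}, using Theorem~\ref{fp.2} to produce the required graded module $H$. First I would invoke Theorem~\ref{fp.2}: since $(R,\fm)$ is $F$-pure, the $R$-module structure on $E = E_R(R/\fm)$ extends to an $x$-torsion-free left $R[x,f]$-module structure; call this module $H$. Then $H_0 \cong E$ and $H$ is $x$-torsion-free, so the hypotheses of the Embedding Theorem are met (with $E$ itself as an injective cogenerator, which it is because $R$ is local). Applying the Embedding Theorem with $M = E$ gives $(0:_R\Delta^S(H)) = (0:_R\Delta^S(\Phi(E))) \in \mathcal{I}(\Phi(E))$, and moreover $\Phi(E)$ is $x$-torsion-free. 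Set $\fc := (0:_R\Delta^S(\Phi(E)))$.

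The next step is to show $\fc \cap S \neq \emptyset$, which is exactly the hypothesis needed to run the `$\Leftarrow$' direction of Proposition~\ref{st.5}. By Theorem~\ref{in.3}, $\mathcal{I}(\Phi(E))$ is a finite set of radical ideals, so the set $\{\fp \in \Spec(R) \cap \mathcal{I}(\Phi(E)) : \fp \cap S \neq \emptyset\}$ is finite, and hence $\mathcal{M}^S(\Phi(E))$ is finite. Now I would apply the `$\Rightarrow$' direction of Proposition~\ref{st.5} to the $x$-torsion-free module $G = \Phi(E)$: finiteness of $\mathcal{M}^S(\Phi(E))$ yields $\Delta^S(\Phi(E)) = \ann_{\Phi(E)}(\fb R[x,f])$ and $(0:_R\Delta^S(\Phi(E))) = \fb$, where $\fb = \fb^{S,\Phi(E)}$. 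In particular $\fc = \fb^{S,\Phi(E)}$, which proves the final sentence of the theorem. To see that $\fb^{S,\Phi(E)} \cap S \neq \emptyset$ (so that $\fb^{S,\Phi(E)} \neq R$ is not automatic, but meeting $S$ is what matters): if $\mathcal{M}^S(\Phi(E)) = \emptyset$ then $\fb^{S,\Phi(E)} = R \ni 1 \in S$; otherwise each $\fp \in \mathcal{M}^S(\Phi(E))$ meets $S$, so picking $s_\fp \in \fp \cap S$ for each of the finitely many $\fp$ and multiplying gives an element of $\bigcap_\fp \fp \cap S = \fb^{S,\Phi(E)} \cap S$.

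It remains to identify $S \cap \fb^{S,\Phi(E)}$ with the set of $S$-test elements for $R$. Here I would unwind Definition~\ref{st.8} via Example~\ref{st.3}(ii). For an arbitrary $R$-module $M$, by the Embedding Theorem $(0:_R\Delta^S(\Phi(M))) \supseteq (0:_R\Delta^S(H)) = (0:_R\Delta^S(\Phi(E))) = \fb^{S,\Phi(E)}$. By Example~\ref{st.3}(ii), $\Delta^S(\Phi(M)) = \bigoplus_{n \in \nn} 0^{*,S}_{Rx^n \otimes_R M}$; an element $s \in S$ is an $S$-test element precisely when $s x^j (1 \otimes m) = 0$ in $(\Phi(M))_0$ for every $m \in 0^{*,S}_M$ and every $j$ — equivalently, chasing the bimodule isomorphisms of Example~\ref{st.3}(ii), when $s$ annihilates the $0$th component of $\Delta^S(\Phi(M))$ together with its $x$-translates, i.e. when $s \in (0:_R\Delta^S(\Phi(M)))$ for every $M$. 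Since $\fb^{S,\Phi(E)} \subseteq (0:_R\Delta^S(\Phi(M)))$ for all $M$, every element of $S \cap \fb^{S,\Phi(E)}$ is an $S$-test element; conversely, taking $M = E$ shows any $S$-test element lies in $(0:_R\Delta^S(\Phi(E))) \cap S = \fb^{S,\Phi(E)} \cap S$. Finally, the set $\fb^{S,\Phi(E)} \cap S$ is non-empty by the previous paragraph, so $R$ has an $S$-test element.

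\textbf{Main obstacle.} The delicate point is the reduction from arbitrary modules $M$ to the single module $E = E_R(R/\fm)$ in the test-element characterization: one must be sure that the Embedding Theorem's inclusion $(0:_R\Delta^S(\Phi(M))) \supseteq (0:_R\Delta^S(\Phi(E)))$ combined with the bookkeeping of Example~\ref{st.3}(ii) genuinely captures the condition ``$sx^j$ annihilates $1 \otimes m$ for all $m \in 0^{*,S}_M$ and all $j$'' rather than something weaker. The bimodule isomorphisms $Rx^{n+h} \otimes_R M \cong Rx^n \otimes_R(Rx^h \otimes_R M)$ must be used carefully to convert ``$s x^j$ kills the degree-$0$ part'' into ``$s$ kills all of $\Delta^S(\Phi(M))$''; this is essentially the verification that $0^{*,S}$ propagates correctly up the graded pieces of $\Phi(M)$, and it is exactly the kind of routine-but-fiddly calculation I would expect the author to either spell out or cite from \cite{btctefsnrer}.
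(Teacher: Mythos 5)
Your proof follows essentially the same route as the paper: establish that $\mathcal{M}^S(\Phi(E))$ is finite via Theorem~\ref{in.3}, apply Proposition~\ref{st.5} to $G = \Phi(E)$ to identify $(0:_R\Delta^S(\Phi(E)))$ with $\fb^{S,\Phi(E)}$ and see that it meets $S$, and then apply the Embedding Theorem~\ref{inv.5} to reduce the annihilation condition for arbitrary $M$ to the case $M = E$. The bookkeeping with Example~\ref{st.3}(ii) to pass between ``$sx^j$ kills $1\otimes m$ for all $m \in 0^{*,S}_M$ and all $j$'' and ``$s$ kills all of $\Delta^S(\Phi(M))$'' is exactly the right manoeuvre, and your worry about it is well placed but resolvable as you indicate.

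There is, however, a genuine misstep in your first paragraph. You take $H$ to be $E$ itself equipped with an $x$-torsion-free left $R[x,f]$-module structure supplied by Theorem~\ref{fp.2}, and then write ``$H_0 \cong E$ \ldots so the hypotheses of the Embedding Theorem are met.'' But that $H$ is \emph{not} an $\nn$-graded left $R[x,f]$-module (if you try to concentrate $E$ in degree $0$, then $x$ would have to act as $0$, contradicting $x$-torsion-freeness), so $H_0$ is not meaningful and the hypotheses of the Embedding Theorem are not in fact satisfied by this $H$. Consequently the claimed equality $(0:_R\Delta^S(H)) = (0:_R\Delta^S(\Phi(E)))$ is not justified as stated, and neither is the memberhip $(0:_R\Delta^S(\Phi(E))) \in \mathcal{I}(\Phi(E))$ obtained this way. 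The paper instead takes $H = \Phi(E) = R[x,f]\otimes_R E$, which \emph{is} $\nn$-graded with $H_0 \cong E$; that $\Phi(E)$ is $x$-torsion-free and that $\mathcal{I}(\Phi(E))$ is finite are both supplied directly by Theorem~\ref{in.3} (which is where Theorem~\ref{fp.2} is really used, upstream). Once you replace your $H$ by $\Phi(E)$ throughout, the rest of your argument goes through unchanged, and in particular your third paragraph, with $H = \Phi(E)$, gives exactly the inclusion $(0:_R\Delta^S(\Phi(E))) \subseteq (0:_R\Delta^S(\Phi(M)))$ that drives the test-element identification. (Also note the small slip: $sx^j(1\otimes m)$ lands in $(\Phi(M))_j$, not $(\Phi(M))_0$; the element $1\otimes m$ lies in degree $0$, but the product does not.)
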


\begin{proof} By Theorem \ref{in.3}, the set
$\mathcal{I}(\Phi(E))$ of $\Phi(E)$-special $R$-ideals is finite.
Consequently, $\mathcal{M}^S(\Phi(E))$ is finite, and Proposition
\ref{st.5} shows that $(0:_R\Delta^S(\Phi(E))) \cap S \neq
\emptyset$.

We use the Embedding Theorem \ref{inv.5} with $\Phi(E)$ playing the
r\^ole of $H$. We conclude that, for every $R$-module $M$, we have
$$
(0:_R\Delta^S(\Phi(E))) \subseteq (0:_R\Delta^S(\Phi(M))).
$$
Thus $(0:_R\Delta^S(\Phi(E)))$ is precisely the set of elements of
$R$ that annihilate $\Delta^S(\Phi(M))$ for every $R$-module $M$.
Since $\Delta^S(\Phi(M))$ is an $R[x,f]$-submodule of $\Phi(M)$ (for
each $R$-module $M$), it follows that $(0:_R\Delta^S(\Phi(E)))\cap
S$ (which is non-empty) is the set of $S$-test elements for $R$.

It also follows from Proposition \ref{st.5} that
$$\Delta^S(\Phi(E)) =
\ann_{\Phi(E)}(\fb^{S,\Phi(E)} R[x,f])\quad \mbox{and}\quad
(0:_R\Delta^S(\Phi(E))) = \fb^{S,\Phi(E)}.$$
\end{proof}

In our applications of these ideas, we shall frequently take $S$ to
be the complement in $R$ of the union of finitely many prime ideals.
In that case, a little more can be said.

\begin{lem}
\label{fp.5} Let $A$ be a commutative ring and let $\fp_1, \ldots,
\fp_n \in \Spec (A)$. Set $T := A \setminus \bigcup_{i=1}^n\fp_i$,
and let $\fa$ be an ideal of $A$ such that $\fa \cap T \neq
\emptyset$. Then $\fa$ can be generated by elements of $\fa \cap T$.
\end{lem}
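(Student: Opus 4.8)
The plan is to use prime avoidance together with a standard trick for enlarging a single element of $\fa\cap T$ to a generating set. First I would pick an element $a_0\in\fa\cap T$, which exists by hypothesis. The goal is to show that every element $a\in\fa$ can be written as a sum $a = a' + a''$ with $a', a''\in\fa\cap T$; once this is known, each generator in any given generating set of $\fa$ is a sum of elements of $\fa\cap T$, so $\fa$ is generated by $\fa\cap T$. To produce such a decomposition, fix $a\in\fa$ and consider, for each index $i$, whether $a\in\fp_i$ or not. The idea is to add to $a$ a suitable multiple of $a_0$ so that the sum lands outside every $\fp_i$, and simultaneously arrange that $a$ minus that same multiple of $a_0$ also lands outside every $\fp_i$.

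The key step is the following avoidance argument. For each $i$, we have $a_0\notin\fp_i$ (since $a_0\in T$). Partition the index set: let $J = \{\,i : a\in\fp_i\,\}$ and its complement. I want to find $r\in A$ such that $a + r a_0\notin\fp_i$ and $a - r a_0 \notin \fp_i$ for all $i$. For $i\in J$ this forces $r\notin\fp_i$ (because then $a\pm r a_0 \equiv \pm r a_0$, and $a_0\notin\fp_i$). For $i\notin J$, write the condition in the residue field $A/\fp_i$: we need the image of $r$ to avoid the two values $-\bar a/\bar a_0$ and $\bar a/\bar a_0$ (here $\bar a_0$ is a unit mod $\fp_i$). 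The cleanest way to package this is to invoke prime avoidance on $A$ directly, or — since residue fields may be finite — to choose $r$ as an element of $\fa$ lying in an appropriate intersection of primes and outside the others, using the standard lemma that for ideals $\fb$ and primes $\fq_1,\dots,\fq_m$ with $\fb\not\subseteq\fq_j$ for all $j$, one has $\fb\not\subseteq\bigcup_j\fq_j$.

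Here is the concrete route I would take. Set $\fb_i := \fp_i$ for $i\in J$ and choose for each $i\notin J$ a nonzero element of $A/\fp_i$ different from both $\pm\bar a/\bar a_0$ — but to avoid residue-field-size issues I would instead work multiplicatively: let $c := \prod_{i\in J} c_i$ where each $c_i\in\fa$ is chosen (by prime avoidance applied to $\fa$, using $\fa\cap T\neq\emptyset$ so $\fa\not\subseteq\fp_i$) to lie in $\fp_\ell$ for $\ell\neq i$ but not in $\fp_i$; then $c\in\fa$, $c\in\fp_\ell$ for $\ell\notin J$, and $c\notin\fp_i$ for $i\in J$. Now $a - c\in\fa$ and I claim $a-c\in T$: for $i\in J$, $a\in\fp_i$ but $c\notin\fp_i$, so $a-c\notin\fp_i$; for $i\notin J$, I have not yet controlled $a-c$ modulo $\fp_i$. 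This is the main obstacle: the single correction $c$ handles the primes containing $a$ but not automatically those not containing $a$. To fix this, I would iterate — after obtaining an element $a_1 := a - c$ with $\{i : a_1\in\fp_i\}\subsetneq J^c$ strictly smaller, repeat; but one must check the index set strictly decreases, which requires choosing $c$ also to avoid landing in new primes. The efficient resolution is to choose $c$ in the ideal $\fa\cap\bigcap_{i\notin J}\fp_i$ but outside $\bigcup_{i\in J}\fp_i$, which is possible exactly when that ideal is not contained in any $\fp_i$ for $i\in J$; since $\fa\not\subseteq\fp_i$ and $\fp_i\not\supseteq\bigcap_{\ell\notin J}\fp_\ell$ (as $\fp_i$ is prime and contains none of the $\fp_\ell$, $\ell\notin J$, up to the usual minimality reduction), prime avoidance delivers such a $c$, and then $a-c$ and $c$ both lie in $\fa\cap T$, giving $a = (a-c) + c$ as desired. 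I expect the delicate point to be exactly this simultaneous avoidance, and I would present it by first reducing to the case where the $\fp_i$ are pairwise incomparable (discarding any $\fp_i$ contained in another changes neither $T$ nor the conclusion).
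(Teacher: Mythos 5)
Your final step contains a genuine gap. You choose $c$ in the ideal $\fa\cap\bigcap_{i\notin J}\fp_i$ but outside $\bigcup_{i\in J}\fp_i$; this does guarantee $a-c\in\fa\cap T$, but it does \emph{not} put $c$ itself in $T$: by construction $c\in\fp_i$ for every $i\notin J$, so as soon as $J\neq\{1,\dots,n\}$ the element $c$ lies in some $\fp_i$ and hence $c\notin T$. Thus $a=(a-c)+c$ has only one summand in $\fa\cap T$, and the decomposition you want is not achieved. (Your earlier product construction $c=\prod_{i\in J}c_i$ has a related defect: when $|J|\geq 2$, pick $i,j\in J$ with $i\neq j$; then the factor $c_j$ lies in $\fp_i$, so the whole product $c$ lies in $\fp_i$, contrary to what you assert.)

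The strategy can be repaired by relaxing the target for $c$: you do not need $c\in T$, only $c$ in the ideal $\fa'$ generated by $\fa\cap T$. Fix $a_0\in\fa\cap T$. After the standard reduction to pairwise incomparable $\fp_1,\dots,\fp_n$, use ordinary prime avoidance to pick $d\in\bigcap_{i\notin J}\fp_i$ with $d\notin\bigcup_{i\in J}\fp_i$ (possible since each $\fp_i$ with $i\in J$, being prime and containing no $\fp_\ell$ with $\ell\notin J$, does not contain $\bigcap_{\ell\notin J}\fp_\ell$), and set $c:=a_0 d$. For $i\in J$ we have $a\in\fp_i$ while $a_0\notin\fp_i$ and $d\notin\fp_i$, so $c\notin\fp_i$ and $a+c\notin\fp_i$; for $i\notin J$ we have $a\notin\fp_i$ and $c\in\fp_i$, so again $a+c\notin\fp_i$. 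Hence $a+c\in\fa\cap T$, and $c\in a_0A\subseteq\fa'$, so $a=(a+c)-c\in\fa'$. This yields a correct, elementary proof.

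For comparison, the paper's proof is much shorter and invokes the stronger form of prime avoidance that permits one non-prime ideal in the union (Kaplansky, \cite[Theorem 81]{IK}): with $\fa'$ the ideal generated by $\fa\cap T$, one has $\fa\subseteq\fa'\cup\fp_1\cup\cdots\cup\fp_n$, so $\fa\subseteq\fa'$ or $\fa\subseteq\fp_i$ for some $i$, and the latter is ruled out by $\fa\cap T\neq\emptyset$. Your route trades this stronger avoidance theorem for a longer element-by-element argument, but as written it needs the correction above.
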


\begin{proof} Let $\fa'$ be the ideal of $A$ generated by
$\fa \cap T$. Then $$\fa \subseteq (\fa \cap T) \cup (\fa \cap (A
\setminus T)) \subseteq \fa' \cup \fp_1 \cup \cdots \cup \fp_n.$$
Since $\fa \cap T \neq \emptyset$, we have, for every $i \in \{1,
\ldots,n\}$, that $\fa \not\subseteq \fp_i$. Therefore, by the Prime
Avoidance Theorem (in the form given in \cite[Theorem 81]{IK}), we
must have $\fa \subseteq \fa'$.
\end{proof}

\begin{cor}
\label{fp.6} Suppose that the local ring $(R,\fm)$ is $F$-pure, and
$S = R \setminus \bigcup_{i=1}^n\fp_i$, where $\fp_1,
\ldots,\fp_n\in \Spec(R)$. It was shown in Theorem {\rm \ref{fp.4}}
that $R$ has an $S$-test element. Set $E := E_R(R/\fm)$. The
$S$-test ideal of $R$, that is, the ideal of $R$ generated by all
$S$-test elements for $R$, is $\fb^{S,\Phi(E)}$, the smallest member
of $\mathcal{I}(\Phi(E))$ that meets $S$. In symbols, $\tau^S(R) =
\fb^{S,\Phi(E)}$.

In particular, the big test ideal $\widetilde{\tau}(R)$ is the
smallest member of $\mathcal{I}(\Phi(E))$ of positive height. (We
interpret the height of the improper ideal $R$ as $\infty$.)
\end{cor}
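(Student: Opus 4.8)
The plan is to read everything off Theorem~\ref{fp.4} and then to eliminate the multiplicatively closed set from the picture by a prime-avoidance argument. Recall from Theorem~\ref{fp.4} that, since $(R,\fm)$ is $F$-pure, $R$ has an $S$-test element, the set of $S$-test elements for $R$ is exactly $S \cap \fb^{S,\Phi(E)}$, and $\fb^{S,\Phi(E)} = (0:_R\Delta^S(\Phi(E)))$. By~\ref{np.2}(i), $\fb^{S,\Phi(E)}$, being a member of $\mathcal{I}(\Phi(E))$, is a radical ideal; and by the discussion in Notation~\ref{st.1} (which rests on~\ref{np.2}(iii)) it is the smallest member of $\mathcal{I}(\Phi(E))$ that meets $S$, so in particular it does meet $S$.

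Next I would invoke Lemma~\ref{fp.5} with $A = R$, with $T = S = R \setminus \bigcup_{i=1}^n \fp_i$, and with $\fa = \fb := \fb^{S,\Phi(E)}$: since $\fb \cap S \neq \emptyset$, the lemma tells us that $\fb$ is generated by elements of $\fb \cap S$, that is, by $S$-test elements for $R$. Conversely, every $S$-test element lies in $\fb$, so the ideal generated by the $S$-test elements is contained in $\fb$. Hence $\tau^S(R) = \fb^{S,\Phi(E)}$, which is the smallest member of $\mathcal{I}(\Phi(E))$ that meets $S$, as claimed. In the degenerate case $\mathcal{M}^S(\Phi(E)) = \emptyset$ one has $\fb^{S,\Phi(E)} = R$ and $1 \in S$ is an $S$-test element, so $\tau^S(R) = R$ as well, consistently with the convention $\height R = \infty$.

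For the final assertion I would apply the first part with $S = R^{\circ}$; this is legitimate because $R$, being Noetherian, has only finitely many minimal primes $\fq_1,\ldots,\fq_m$, and $R^{\circ} = R \setminus \bigcup_{i=1}^m \fq_i$ is of the required form. By Reminder~\ref{inv.5r}, an $R^{\circ}$-test element for $R$ is precisely a big test element for $R$, so $\tau^{R^{\circ}}(R) = \widetilde{\tau}(R)$; the first part therefore gives $\widetilde{\tau}(R) = \fb^{R^{\circ},\Phi(E)}$, the smallest member of $\mathcal{I}(\Phi(E))$ that meets $R^{\circ}$. It remains only to note that an ideal $\fa$ of $R$ meets $R^{\circ}$ if and only if $\height\fa > 0$: indeed $\fa \cap R^{\circ} \neq \emptyset$ means $\fa \not\subseteq \bigcup_{i=1}^m \fq_i$, which by prime avoidance is equivalent to $\fa \not\subseteq \fq_i$ for every $i$, hence to $\fa$ being contained in no minimal prime of $R$, i.e.\ to $\height\fa > 0$ (with $\height R = \infty$). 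Thus $\widetilde{\tau}(R)$ is the smallest member of $\mathcal{I}(\Phi(E))$ of positive height.

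I do not expect a serious obstacle here, since the substance is already packaged in Theorem~\ref{fp.4} and Lemma~\ref{fp.5}. The only points needing a little care are the translation between `$\fa$ meets $R^{\circ}$' and `$\fa$ has positive height' via prime avoidance, and checking that the conventions for the improper ideal $R$ (and for the degenerate case $\mathcal{M}^S(\Phi(E)) = \emptyset$) are handled consistently.
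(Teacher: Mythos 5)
Your proof is correct and follows essentially the same route as the paper's: extract the description of the set of $S$-test elements from Theorem~\ref{fp.4}, apply the prime-avoidance Lemma~\ref{fp.5} to show $\fb^{S,\Phi(E)}$ is generated by those elements, and then specialize to $S = R^{\circ}$ using the observation that a proper ideal meets $R^{\circ}$ precisely when it has positive height. You supply a few extra details that the paper leaves implicit (the reverse inclusion $\tau^S(R) \subseteq \fb^{S,\Phi(E)}$, the degenerate case $\fb^{S,\Phi(E)} = R$, and the prime-avoidance argument for the height equivalence), but there is no substantive difference in method.
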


\begin{proof} We saw in Theorem \ref{fp.4} that the set $S \cap
\fb^{S,\Phi(E)}$ is non-empty and equal to the set of $S$-test
elements for $R$. By Lemma \ref{fp.5}, the ideal $\fb^{S,\Phi(E)}$
can be generated by elements of $S \cap \fb^{S,\Phi(E)}$.

For the final claim, take $S = R^{\circ}$ and note that a proper
ideal of $R$ has positive height if and only if it meets
$R^{\circ}$.
\end{proof}

We shall actually use variations of the Embedding Theorem
\ref{inv.5} in Proposition \ref{inv.6} below.

\begin{defs}
\label{inv.6d} Suppose that $(R,\fm)$ is local and $F$-pure; set $E
:= E_R(R/\fm)$. Let $M$ be an $R$-module.
\begin{enumerate}
\item We define the {\em finitistic $S$-tight closure $0_M^{*{\rm fg},S}$
of $0$ in $M$\/} to be $\bigcup_N 0^{*,S}_N$, where the union is
taken over all finitely generated submodules $N$ of $M$.
\item We define the {\em finitistic $S$-test ideal $\tau^{{\rm fg},S} (R)$ of $R$\/} to be
$\bigcap_L(0:_R0^{*,S}_L)$, where the intersection is taken over all
finitely generated $R$-modules $L$.
\end{enumerate}
\end{defs}

\begin{prop}
\label{inv.6} Suppose that $(R,\fm)$ is local and $F$-pure; set $E
:= E_R(R/\fm)$.
\begin{enumerate}
\item For every $R$-module $M$, we have ${\mathcal I}(\Phi(M))
\subseteq {\mathcal I}(\Phi(E))$ and $(0:_R\Delta^S(\Phi(E)))
\subseteq (0:_R\Delta^S(\Phi(M))) \in {\mathcal I}(\Phi(E))$.

\item The ideal $(0:_R0^{*{\rm
f}{\rm g},S}_E)$ annihilates $(0:_R0^{*,S}_L)$ for every finitely
generated $R$-module $L$.

\item We have $\tau^{{\rm fg},S} (R) = (0:_R0^{*{\rm
f}{\rm g},S}_E)$, and this ideal belongs to ${\mathcal I}(\Phi(E))$.

\item For every $R$-module $M$, we have $(0:_R0^{*,S}_E)
\subseteq (0:_R0^{*,S}_M)$.

\item We have $\fb^{S,\Phi(E)} = (0:_R0^{*,S}_E)$. Consequently, when
$S$ is the complement in $R$ of the union of finitely many prime
ideals, then the $S$-test ideal $\tau^S(R)$ of $R$ is equal to
$(0:_R0^{*,S}_E)$.

\end{enumerate}
\end{prop}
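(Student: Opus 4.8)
The plan is to prove the five parts of Proposition~\ref{inv.6} in the stated order, since each draws on the one before it, and in all cases the engine is a suitable version of the Embedding Theorem~\ref{inv.5} applied with $\Phi(E)$ in the r\^ole of $H$: by Theorem~\ref{in.3} we know $\Phi(E) = R[x,f]\otimes_RE_R(R/\fm)$ is $x$-torsion-free with ${\mathcal I}(\Phi(E))$ finite, and $H_0 = E$ satisfies the hypothesis of the Embedding Theorem because $E$ is an injective cogenerator of $R$ (here $R$ is local, so $\bigoplus_{\fm\in\Max(R)}E_R(R/\fm) = E_R(R/\fm) = E$). Part~(i) is then essentially a restatement of the last sentence of the Embedding Theorem: for any $R$-module $M$ the monomorphism $\nu : \Phi(M)\hookrightarrow K$ gives ${\mathcal I}(\Phi(M))\subseteq{\mathcal I}(K) = {\mathcal I}(H) = {\mathcal I}(\Phi(E))$, the inclusion $(0:_R\Delta^S(\Phi(E)))\subseteq(0:_R\Delta^S(\Phi(M)))$, and $(0:_R\Delta^S(\Phi(M)))\in{\mathcal I}(\Phi(M))\subseteq{\mathcal I}(\Phi(E))$. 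So part~(i) costs nothing beyond citing \ref{inv.5}.

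For part~(ii), the point is to move from the (possibly infinitely generated) module $E$ to arbitrary finitely generated modules. Fix a finitely generated $R$-module $L$ and an element $m\in 0^{*,S}_L$; I want every element of $(0:_R 0^{*{\rm fg},S}_E)$ to kill $m$. The standard device (as in Hochster--Huneke's proof that $\tau(R) = (0:_R 0^{*{\rm fg}}_E)$, \cite[Proposition (8.23)(d)]{HocHun90}, and in \cite[\S2]{btctefsnrer}) is: since $L$ is finitely generated over the local ring $R$, pick a nonzero element of $L$, map it to $E$ by an $R$-homomorphism $\phi:L\to E$ (cogenerator property), and push $m$ forward. Because $S$-tight closure commutes with $R$-homomorphisms in the obvious way (if $sx^n\otimes m = 0$ in $Rx^n\otimes_RL$ then $sx^n\otimes\phi(m) = 0$ in $Rx^n\otimes_RE$), one gets $\phi(m)\in 0^{*,S}_E$; but $\phi(m)$ lies in the finitely generated submodule $\phi(L)\subseteq E$, so $\phi(m)\in 0^{*,S}_{\phi(L)}\subseteq 0^{*{\rm fg},S}_E$. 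Hence for $d\in(0:_R0^{*{\rm fg},S}_E)$ we have $d\phi(m) = \phi(dm) = 0$; ranging over all $\phi$ and using that $E$ is a cogenerator (so $dm\neq 0$ would be detected by some $\phi$) forces $dm = 0$. I expect this part to be the main obstacle, because one must be careful that the argument is really ``$d$ annihilates $m$'' rather than ``$d$ annihilates $\phi(m)$'', and one must reduce from $d\cdot 0^{*,S}_L = 0$ to the finitistic closure of $E$ using only finitely generated submodules of $E$ at each stage; the cogenerator argument handles exactly this, but it has to be assembled correctly.

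Part~(iii) follows by combining (ii) with the definition: by~(ii), $(0:_R 0^{*{\rm fg},S}_E)\subseteq\bigcap_L(0:_R 0^{*,S}_L) = \tau^{{\rm fg},S}(R)$; conversely $E$ is a directed union of its finitely generated submodules $N$, and $0^{*{\rm fg},S}_E = \bigcup_N 0^{*,S}_N$ is by definition this directed union, so $(0:_R 0^{*{\rm fg},S}_E) = \bigcap_N(0:_R 0^{*,S}_N)\supseteq\bigcap_L(0:_R 0^{*,S}_L) = \tau^{{\rm fg},S}(R)$, giving equality; membership in ${\mathcal I}(\Phi(E))$ then needs a separate argument, which I would obtain by exhibiting $0^{*{\rm fg},S}_E$ (or rather the corresponding special annihilator submodule it cuts out) inside $\Phi(E)$ and invoking \ref{np.2}(i) together with \ref{inv.5}, along the lines of \cite[Theorem 3.3]{btctefsnrer}. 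For part~(iv): by the Embedding Theorem applied to $M$, $(0:_R\Delta^S(\Phi(E)))\subseteq(0:_R\Delta^S(\Phi(M)))$, and since $0^{*,S}_E$ is the $0$th component of $\Delta^S(\Phi(E))$ and $0^{*,S}_M$ is the $0$th component of $\Delta^S(\Phi(M))$ (Definition~\ref{st.2}), and since by Example~\ref{st.3}(2) $\Delta^S(\Phi(E)) = \bigoplus_{n\in\nn} 0^{*,S}_{Rx^n\otimes_RE}$ with each summand an $R$-homomorphic image related to $0^{*,S}_E$ via the bimodule isomorphisms, one gets $(0:_R\Delta^S(\Phi(E))) = (0:_R 0^{*,S}_E)$; combined with the inclusion, $(0:_R 0^{*,S}_E)\subseteq(0:_R\Delta^S(\Phi(M)))\subseteq(0:_R 0^{*,S}_M)$. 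Finally part~(v): Theorem~\ref{fp.4} gives $\fb^{S,\Phi(E)} = (0:_R\Delta^S(\Phi(E)))$, and the identity $(0:_R\Delta^S(\Phi(E))) = (0:_R 0^{*,S}_E)$ just used (via Example~\ref{st.3}(2)) yields $\fb^{S,\Phi(E)} = (0:_R 0^{*,S}_E)$; the last sentence is then immediate from Corollary~\ref{fp.6}, which identifies $\tau^S(R)$ with $\fb^{S,\Phi(E)}$ when $S$ is the complement of a union of finitely many primes. Throughout, the only genuinely new work beyond bookkeeping is the cogenerator reduction in part~(ii) and the ${\mathcal I}(\Phi(E))$-membership claim in part~(iii); everything else is routine reindexing against \ref{st.2}, \ref{st.3}, \ref{inv.5}, \ref{fp.4} and \ref{fp.6}.
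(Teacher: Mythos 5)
Your outline of parts (i), (ii) and (v) is sound. For (ii) you use the injective cogenerator property of $E$ directly (for every $\phi\colon L\to E$, $\phi(0^{*,S}_L)\subseteq 0^{*,S}_{\phi(L)}\subseteq 0^{*{\rm fg},S}_E$, and then quantify over all $\phi$), whereas the paper reduces the zero submodule of $L$ to a countable intersection of irreducible submodules of finite colength via Krull intersection and works with the maps $L\to L/Q_i$; the two routes are essentially equivalent, and yours is a little shorter. The equality in (iii) and the argument in (v), given (iv) and Example~\ref{st.3}(ii), are also fine.

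However, your proof of (iv) is circular. You invoke the equality $(0:_R\Delta^S(\Phi(E)))=(0:_R 0^{*,S}_E)$, justified by the vague phrase ``each summand an $R$-homomorphic image related to $0^{*,S}_E$ via the bimodule isomorphisms.'' That equality amounts precisely to the inclusion $(0:_R 0^{*,S}_E)\subseteq(0:_R 0^{*,S}_{Rx^n\otimes_RE})$ for all $n$, which is exactly an instance of statement (iv) applied to $M=Rx^n\otimes_RE$; the reverse inclusion is the easy one. (Note that $Rx^n\otimes_RE$ is \emph{not} $R$-isomorphic to $E$, nor is there an evident $R$-linear map $0^{*,S}_E\to 0^{*,S}_{Rx^n\otimes_RE}$ --- multiplication by $x^n$ is only $f^n$-semilinear.) The paper proves (iv) by a different tool, namely \cite[Lemma~3.1]{btctefsnrer}: a homogeneous $R[x,f]$-homomorphism $\mu\colon\Phi(M)\to\prod'_{\lambda}\Phi(E)$ whose degree-zero component $\mu_0$ is an $R$-monomorphism; then $\mu_0$ carries $0^{*,S}_M$ into a product of copies of $0^{*,S}_E$. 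This is not the Embedding Theorem \ref{inv.5}: the map $\nu$ in \ref{inv.5} is an $R[x,f]$-monomorphism in all degrees but lands in a $K$ whose degree-zero component is not a product of copies of $E$, which is why it does not give (iv) directly.

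The membership claim in (iii) is also left hanging. You say you would ``exhibit $0^{*{\rm fg},S}_E$ (or rather the corresponding special annihilator submodule it cuts out) inside $\Phi(E)$,'' but $0^{*{\rm fg},S}_E$ need not be the degree-zero component of a special annihilator submodule of $\Phi(E)$ (it is in general a proper subset of $0^{*,S}_E$, which is). The paper's argument instead rewrites $\tau^{{\rm fg},S}(R)=\bigcap_L(0:_R 0^{*,S}_L)$ as $\bigcap_L(0:_R\Delta^S(\Phi(L)))$, both intersections over finitely generated $L$, using Example~\ref{st.3}(ii) together with the observation that $Rx^n\otimes_RL$ is finitely generated whenever $L$ is; then each $(0:_R\Delta^S(\Phi(L)))\in{\mathcal I}(\Phi(E))$ by part (i), and ${\mathcal I}(\Phi(E))$ is closed under arbitrary intersections by \cite[Corollary~1.12]{ga}. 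You would need this identification (or something equivalent) to complete (iii).
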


\begin{proof} (i) This follows from the Embedding
Theorem \ref{inv.5}, with $H$ taken as $\Phi(E)$.

(ii) By Krull's Intersection Theorem, $\bigcap_{n\in\N}\fm^nL = 0$.
We can therefore express the zero submodule of $L$ as the
intersection of a countable family $(Q_{i})_{i\in \N}$ of
irreducible submodules of finite colength. (A submodule of $L$ is
said to be {\em irreducible\/} if it is proper and cannot be
expressed as the intersection of two strictly larger submodules.)
Note that $E_R(L/Q_i) = E$ for all $i \in \N$.

The $R$-monomorphism $\Lambda_0 : L \lra \prod_{i\in\N}L/Q_i$ for
which $\lambda_0(g) = (g + Q_i)_{i\in \N}$ for all $g \in L$ can be
extended to a homogeneous $R[x,f]$-homomorphism
$$
\lambda : \Phi(L) = \bigoplus_{n\in\nn} Rx^n\otimes_RL \lra
\prod_{i\in\N}{\textstyle ^{^{^{\!\!\Large \prime}}}} \Phi(L/Q_i) =
\prod_{i\in\N}{\textstyle ^{^{^{\!\!\Large \prime}}}}
\left(\bigoplus_{n\in\nn} Rx^n\otimes_R(L/Q_i)\right)
$$
whose restriction to the $n$th component of $\Phi(L)$, for $n \in
\nn$, satisfies $\lambda(rx^n\otimes g) = \left(rx^n\otimes (g +
Q_i)\right)_{i\in\N}$ for all $r \in R$ and $g \in L$. It is clear
that the $R$-monomorphism $\lambda_0$ (that is, the component of
degree $0$ of $\lambda$) maps $0^{*,S}_L$ into $\prod_{i\in
\N}0^{*,S}_{L/Q_i}$. But $L/Q_i$ is $R$-isomorphic to a finitely
generated submodule of $E$, and so $0^{*,S}_{L/Q_i}$ is annihilated
by $(0:_R0^{*{\rm f}{\rm g},S}_E)$ (for all $i \in \N$). It follows
that $0^{*,S}_L$ is annihilated by $(0:_R0^{*{\rm f}{\rm g},S}_E)$.

(iii) By part (ii), we have $(0:_R0^{*{\rm f}{\rm g},S}_E) \subseteq
\bigcap_L(0:_R0^{*,S}_L)$, where the intersection is taken over all
finitely generated $R$-modules $L$. Thus $(0:_R0^{*{\rm f}{\rm
g},S}_E) \subseteq \tau^{{\rm fg},S} (R)$. On the other hand, by
definition, $\tau^{{\rm fg},S} (R)$ annihilates $\bigcup_N
0^{*,S}_N$, where the union is taken over all finitely generated
submodules $N$ of $E$; therefore $\tau^{{\rm fg},S} (R) \subseteq
(0:_R0^{*{\rm f}{\rm g},S}_E)$.

For each finitely generated $R$-module $L$,
$$\Delta^{S}(\Phi(L)) = 0^{*,S}_L \oplus 0^{*,S}_{Rx \otimes_RL}
\oplus \cdots \oplus 0^{*,S}_{Rx^n \otimes_RL}\oplus \cdots, $$ by
Example \ref{st.3}(ii), so that $ (0:_R\Delta^{S}(\Phi(L))) =
\bigcap_{n\in\nn} (0:_R0^{*,S}_{Rx^n \otimes_RL}). $ Note that $Rx^n
\otimes_RL$ is a finitely generated $R$-module, for each $n \in
\nn$. It therefore follows that
$$
{\textstyle \bigcap_L(0:_R\Delta^{S}(\Phi(L))) =
\bigcap_L(0:_R0^{*,S}_L)},
$$
where in both cases the intersection is taken over all finitely
generated $R$-modules $L$. Therefore $\tau^{{\rm fg},S} (R)$ is
equal to the above ideal $\bigcap_L(0:_R\Delta^{S}(\Phi(L)))$, and
the latter is in ${\mathcal I}(\Phi(E))$ because each
$(0:_R\Delta^{S}(\Phi(L)))$ is (by part (i)) and ${\mathcal
I}(\Phi(E))$ is closed under taking arbitrary intersections (by
\cite[Corollary 1.12]{ga}).

(iv) By \cite[Lemma 3.1]{btctefsnrer}, there is a family of graded
left $R[x,f]$-modules
$\left(H^{(\lambda)}\right)_{\lambda\in\Lambda}$, with each
$H^{(\lambda)}$ equal to $\Phi(E)$, and a homogeneous
$R[x,f]$-homomorphism
$$
\mu : \Phi(M) \lra \prod_{\lambda\in\Lambda}{\textstyle
^{^{^{\!\!\Large \prime}}}}H^{(\lambda)}
$$
such that its component $\mu_0$ of degree $0$ is a monomorphism.
Since $\mu_n(sx^n\otimes m) = sx^n \mu_0(m)$ for all $m \in M$,
$s\in S$ and $n \in \nn$, it follows that the $R$-monomorphism
$\mu_0$ maps $0^{*,S}_M$ into a direct product of copies of
$0^{*,S}_E$. Therefore $(0:_R0^{*,S}_E)$ annihilates $0^{*,S}_M$.
Note that this is true for each $R$-module $M$.

(v) It now follows from part (iv) and the fact (see Example
\ref{st.3}(ii)) that
$$\Delta^{S}(\Phi(E)) = 0^{*,S}_E \oplus 0^{*,S}_{Rx
\otimes_RE} \oplus \cdots \oplus 0^{*,S}_{Rx^n \otimes_RE}\oplus
\cdots
$$ that $(0:_R0^{*,S}_E) = (0:_R\Delta^{S}(\Phi(E)))$. But $(0:_R\Delta^{S}(\Phi(E))) =
\fb^{S,\Phi(E)}$, by Theorem \ref{fp.4}.

When $S$ is the complement in $R$ of the union of finitely many
prime ideals, it follows from Corollary \ref{fp.6} that $\tau^S(R)=
\fb^{S,\Phi(E)}$.
\end{proof}

\begin{rmks}
\label{inv.7} Suppose that $(R,\fm)$ is local and $F$-pure; set $E
:= E_R(R/\fm)$.
\begin{enumerate}
\item In the special case in which $S$ is taken to be $R^{\circ}$,
Proposition \ref{inv.6}(v) reduces to the (probably well-known)
result that the big test ideal $\widetilde{\tau}(R) =
\tau^{R^{\circ}}(R)$ of $R$ is equal to $(0:_R0^{*}_E)$.
\item In the special case in which $S$ is taken to be $R^{\circ}$,
the first part of Proposition \ref{inv.6}(iii) reduces to (a special
case of) a result of Hochster--Huneke \cite[Proposition
(8.23)(d)]{HocHun90} about the test ideal $\tau(R)$:
$$
\tau(R) = \tau^{{\rm fg},R^{\circ}} (R) = (0:_R0^{*{\rm f}{\rm
g},R^{\circ}}_E) = (0:_R0^{*{\rm f}{\rm g}}_E).
$$
\end{enumerate}
\end{rmks}

We have seen that, over an $F$-pure local ring $(R,\fm)$, the set
$\mathcal{I}(\Phi(E))$ (where $E := E_R(R/\fm)$) of radical ideals
includes the test ideal $\tau(R)$, the big test ideal
$\widetilde{\tau}(R) = \fb^{R^{\circ},\Phi(E)} =
\tau^{R^{\circ}}(R)$ of $R$ and the $S$-test ideal, for each
multiplicatively closed subset $S$ of $R$ which is the complement in
$R$ of the union of finitely many prime ideals. It is natural to ask
whether every member of the finite set $\mathcal{I}(\Phi(E))$ occurs
as the $S'$-test ideal for some multiplicatively closed subset $S'$
of $R$. In Theorem \ref{fp.6a} below, we shall answer this question
in the affirmative.

\begin{thm}
\label{fp.6a} Suppose that the local ring $(R,\fm)$ is $F$-pure, and
set $E := E_R(R/\fm)$. Let $\fa \in \mathcal{I}(\Phi(E))$. Then
there exists a multiplicatively closed subset $S$ of $R$ such that
$\fa$ is the $S$-test ideal of $R$. Moreover, $S$ can be taken to be
the complement in $R$ of the union of finitely many prime ideals.
\end{thm}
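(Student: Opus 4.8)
The plan is to realise $\fa$ as $\fb^{S,\Phi(E)}$ for a suitable $S$ of the required form, and then invoke Corollary \ref{fp.6} to conclude that $\fa = \tau^S(R)$. By \ref{np.2}(iii) applied to the $x$-torsion-free module $\Phi(E)$ (which is $x$-torsion-free by Theorem \ref{in.3}), the given $\fa \in \mathcal{I}(\Phi(E))$ can be written as $\fa = \fp_1 \cap \cdots \cap \fp_t$ with each $\fp_i \in \mathcal{I}(\Phi(E)) \cap \Spec(R)$; moreover, since $\mathcal{I}(\Phi(E))$ is finite (Theorem \ref{in.3}), we may and do take the $\fp_i$ to be precisely the minimal prime $\Phi(E)$-special $R$-ideals lying above $\fa$, so that they are pairwise incomparable. (If $\fa = R$, take $t = 0$ and $S = R$; if some $\fp_i = 0$, i.e.\ $\fa = 0$, the argument below still goes through with that $\fp_i$ allowed.)

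The key point is to choose $S$ so that $\mathcal{M}^S(\Phi(E)) = \{\fp_1,\ldots,\fp_t\}$. The natural candidate is
\[
S := R \setminus \left( \fp_1 \cup \cdots \cup \fp_t \cup \bigcup_{\fq} \fq \right),
\]
where $\fq$ runs over those prime $\Phi(E)$-special $R$-ideals that are \emph{not} contained in any $\fp_i$ together with enough of their ``competitors'' to block unwanted minimal primes. More carefully: let $\mathcal{P} := \mathcal{I}(\Phi(E)) \cap \Spec(R)$, a finite set by Theorem \ref{in.3}. Put $\mathcal{Q} := \{\fq \in \mathcal{P} : \fq \not\subseteq \fp_i \text{ for every } i\}$ and set $S := R \setminus \bigcup_{\fq \in \mathcal{Q}} \fq$. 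Then $S$ is the complement of a finite union of primes, as required. Now I must check two things. First, each $\fp_i$ meets $S$: indeed $\fp_i \subseteq \bigcup_{\fq\in\mathcal{Q}}\fq$ would force, by prime avoidance, $\fp_i \subseteq \fq$ for some $\fq \in \mathcal{Q}$, and since $\fq \subseteq \fp_i$ is impossible (as $\fq \in \mathcal{Q}$), we would get $\fp_i \subsetneq \fq$; but $\fq \in \mathcal{Q}$ only needs $\fq \not\subseteq$ any $\fp_j$, which does not immediately contradict $\fp_i \subsetneq \fq$ — so this is exactly where care is needed. To fix this I would instead take $\mathcal{Q} := \{\fq \in \mathcal{P} : \fq \not\supseteq \fa\}$, equivalently $\fq \not\supseteq \fp_i$ for all $i$ is wrong too; the clean choice is $\mathcal{Q} := \{\fq \in \mathcal{P} : \fa \not\subseteq \fq\}$ and $S := R \setminus \bigcup_{\fq \in \mathcal{Q}}\fq$. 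With this choice, a prime $\fp' \in \mathcal{P}$ meets $S$ iff $\fp' \not\subseteq \bigcup_{\fq\in\mathcal{Q}}\fq$, and (using prime avoidance and finiteness) one shows $\fp' \cap S \neq \emptyset \iff \fa \subseteq \fp'$. Hence $\{\fp' \in \mathcal{P} : \fp' \cap S \neq \emptyset\} = \{\fp' \in \mathcal{P} : \fp' \supseteq \fa\}$, whose minimal members are exactly $\fp_1,\ldots,\fp_t$ (because these are the minimal primes over the radical ideal $\fa$ and all lie in $\mathcal{P}$). Therefore $\mathcal{M}^S(\Phi(E)) = \{\fp_1,\ldots,\fp_t\}$ and $\fb^{S,\Phi(E)} = \fp_1 \cap \cdots \cap \fp_t = \fa$.

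Finally, since this $S$ is the complement in $R$ of the union of the finitely many primes in $\mathcal{Q}$, Corollary \ref{fp.6} applies directly and gives $\tau^S(R) = \fb^{S,\Phi(E)} = \fa$, which is what we want. (When $\mathcal{Q} = \emptyset$, i.e.\ $\fa$ is contained in no member of $\mathcal{P}$, this forces $\fa = R$ and $S = R$; when $\fa = R$ one simply takes $S = R^\circ$ and reads off $\tau^{R}(R)=R$ — but one should double-check $R \in \mathcal{I}(\Phi(E))$, which holds since $R$ corresponds to the zero special annihilator submodule.)

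The main obstacle, and the step I expect to consume most of the verification, is the equivalence ``$\fp' \cap S \neq \emptyset \iff \fa \subseteq \fp'$'' for $\fp' \in \mathcal{P}$: the forward direction needs that if $\fa \not\subseteq \fp'$ then $\fp' \in \mathcal{Q}$ so $\fp' \cap S = \emptyset$ (immediate), while the reverse direction — if $\fa \subseteq \fp'$ then $\fp'$ actually \emph{meets} $S$ — requires showing $\fp' \not\subseteq \bigcup_{\fq \in \mathcal{Q}}\fq$, i.e.\ ruling out $\fp' \subseteq \fq$ for some $\fq$ with $\fa \not\subseteq \fq$; but $\fa \subseteq \fp' \subseteq \fq$ would give $\fa \subseteq \fq$, a contradiction, so prime avoidance closes the argument. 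This is short once set up correctly, but getting the definition of $\mathcal{Q}$ right the first time is the subtle part.
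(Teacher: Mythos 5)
Your proposal is correct and takes essentially the same approach as the paper: write $\fa = \fp_1 \cap \cdots \cap \fp_t$ with the $\fp_i$ the minimal primes over $\fa$ in $\mathcal{I}(\Phi(E)) \cap \Spec(R)$, choose a finite family of primes from $\mathcal{I}(\Phi(E)) \cap \Spec(R)$, set $S$ to be the complement of their union, verify $\mathcal{M}^S(\Phi(E)) = \{\fp_1,\ldots,\fp_t\}$, and invoke Corollary \ref{fp.6}. Your choice $\mathcal{Q} = \{\fq \in \mathcal{P} : \fa \not\subseteq \fq\}$ is in fact a cleaner variant than the paper's $\mathcal{T} \cup \mathcal{U}$ (the paper separates primes incomparable to all $\fp_i$ from maximal primes properly below some $\fp_i$, then argues via associated primes of $\fb^{S,\Phi(E)}$), and your verification that $\fp' \cap S \neq \emptyset \iff \fa \subseteq \fp'$ is shorter. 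One caution: your parenthetical edge-case remarks are muddled and partly wrong --- $\mathcal{Q} = \emptyset$ means $\fa$ is contained in \emph{every} member of $\mathcal{P}$, forcing $\fa = 0$ (not $\fa = R$) since the minimal primes of the reduced ring $R$ all lie in $\mathcal{P}$ and intersect to $0$; taking $S = R$ for $\fa = R$ gives $\tau^R(R) = 0$, and taking $S = R^\circ$ gives the big test ideal, which need not be $R$. Fortunately none of this matters: the general construction already handles $\fa = R$ correctly (it yields $\mathcal{Q} = \mathcal{P}$, hence $\mathcal{M}^S(\Phi(E)) = \emptyset$ and $\fb^{S,\Phi(E)} = R$) and $\fa = 0$ correctly (it yields $\mathcal{Q} = \emptyset$, $S = R$, $\fb^{S,\Phi(E)} = 0$), so you should simply delete the ad hoc parentheticals rather than try to patch them.
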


\begin{proof} If $\fa = R$, then we can take $S = \{1\}$ or $S = R \setminus \fm$. We therefore
assume henceforth in this proof that $\fa $ is proper.

Let $\fp_1, \ldots, \fp_t$ be the (distinct) associated prime ideals
of $\fa$; recall from \cite[Theorem 3.6 and Corollary 3.7]{ga} that
they all belong to $\mathcal{I}(\Phi(E))$. Let $\mathcal{T}$ be the
set of all prime members of $\mathcal{I}(\Phi(E))$ which neither
contain, nor are contained in, any of $\fp_1, \ldots, \fp_t$. Let
$\fq_1, \ldots, \fq_u$ be the maximal members of the set of prime
ideals in $\mathcal{I}(\Phi(E))$ that are properly contained in one
of $\fp_1, \ldots, \fp_t$, and let $\mathcal{U} := \{ \fq_1, \ldots,
\fq_u\}$. (Observe that ${\mathcal T}$ and/or ${\mathcal U}$ could
be empty; for example, both are empty if $\fa = 0$.)

Set $S := R \setminus \bigcup_{\fq \in \mathcal{T}\cup
\mathcal{U}}\fq$. Our aim is to show that $\fa$ is the $S$-test
ideal $\tau^S(R)$ of $R$. It follows from Corollary \ref{fp.6} that
$\tau^S(R) = \fb^{S,\Phi(E)}$, the intersection of the minimal
members of the set of prime ideals in $\mathcal{I}(\Phi(E))$ that
meet $S$.

Let $\fp \in \mathcal{I}(\Phi(E))\cap \Spec(R)$. Then, since
$\mathcal{I}(\Phi(E))$ and therefore $\mathcal{T}$ and $\mathcal{U}$
are finite, $\fp \cap S = \emptyset$ if and only if $\fp$ is
contained in some $\fq \in \mathcal{T}\cup \mathcal{U}$.

Let $i \in \{1,\ldots,t\}$. Then $\fp_i$ meets $S$, or else $\fp_i
\subseteq \fq_j$ for some $j \in \{1, \ldots, u\}$, and as $\fq_j$
is properly contained in one of $\fp_1, \ldots, \fp_t$, this would
lead to a contradiction to the minimality of the primary
decomposition $\fa = \fp_1\cap \cdots \cap \fp_t$. Furthermore,
$\fp_i$ must be a minimal member of the set $ \mathcal{J} := \left\{
\fp \in \mathcal{I}(\Phi(E))\cap\Spec(R) : \fp \cap S \neq \emptyset
\right\},$ for otherwise there would exist $\fp \in \mathcal{J}$
with $\fp \subset \fp_i$, so that $\fp$ would be contained in one of
$\fq_1, \ldots, \fq_u$ and therefore disjoint from $S$. This shows
that $\fp_1, \ldots, \fp_t$ are all associated primes of
$\fb^{S,\Phi(E)}$. To complete the proof, it is enough for us to
show that there is no other associated prime of this ideal.

So suppose that $\fp \in \ass \fb^{S,\Phi(E)} \setminus \{\fp_1,
\ldots,\fp_t\}$ and seek a contradiction. Then $\fp$ must contain,
or be contained in, $\fp_i$ for some $i \in \{1, \ldots, t\}$ (or
else it would be in $\mathcal{T}$ and disjoint from $S$); if $\fp
\subset \fp_i$, then $\fp$ would be contained in $\fq_j$ for some $j
\in \{1, \ldots, t\}$ and so would be disjoint from $S$; if $\fp
\supset \fp_i$, then $\fp$ could not be a primary component of the
radical ideal $\fb^{S,\Phi(E)}$. Thus each possibility leads to a
contradiction. Therefore $\ass \fb^{S,\Phi(E)} = \{\fp_1, \ldots,
\fp_t\}$ and $\fb^{S,\Phi(E)} = \fa$.
\end{proof}

\section{\it The complete case}
\label{fp}

In this section we shall concentrate on the case where $(R,\fm)$ is
local, $F$-pure and complete.

\begin{thm}
\label{fp.11} Suppose $(R,\fm)$ is local, $F$-pure and complete. Set
$E := E_R(R/\fm)$. Let $\fc \in {\mathcal I}(\Phi(E))$ with $\fc
\neq R$. In the light of Theorem\/ {\rm \ref{fp.6a}}, let $\fp_1,
\ldots,\fp_w$ be prime ideals of $R$ for which the multiplicatively
closed subset $S = R \setminus \bigcup_{i=1}^w\fp_i$ of $R$
satisfies $\fc = \tau^{S}(R)$. Set $ J := \Delta^{S}(\Phi(E)),$ a
graded left $R[x,f]$-module.
\begin{enumerate}
\item We have
$
J = 0^{*,S}_{E} \oplus 0^{*,S}_{Rx\otimes_RE} \oplus \cdots \oplus
0^{*,S}_{Rx^n\otimes_RE} \oplus \cdots.
$
\item When we regard $J$ as a graded left
$(R/\fc)[x,f]$-module in the natural way, it is $x$-torsion-free and
has ${\mathcal I}_{R/\fc}(J) = \left\{\fg/\fc : \fg \in {\mathcal
I}(\Phi(E)) : \fg \supseteq \fc \right\}.$
\item The $0$th component $J_0$ of $J$ is $(0:_{E}\fc)$; as
$R/\fc$-module, this is isomorphic to
$E_{R/\fc}((R/\fc)/(\fm/\fc))$.
\item The ring $R/\fc$ is $F$-pure.
\item We have ${\mathcal I}(\Phi_{R/\fc}(J_0)) \subseteq {\mathcal
I}_{R/\fc}(J)$, so that
$$
\left\{ \fd : \fd \mbox{~is an ideal of $R$ with~} \fd \supseteq \fc
\mbox{~and~} \fd/\fc \in {\mathcal I}(\Phi_{R/\fc}(J_0))\right\}
\subseteq {\mathcal I}(\Phi_R(E)).
$$
\end{enumerate}
\end{thm}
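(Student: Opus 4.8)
The plan is to handle the five parts in turn: the first three by assembling results already available, and the last two from two simple observations --- that a Frobenius action on $E$ restricts to the degree-zero part of $J$, and that $J$ itself can play the role of the module $H$ in the Embedding Theorem~\ref{inv.5} over the ring $R/\fc$. Part (i) is immediate from Example~\ref{st.3}(ii) applied with $M=E$. For part (ii), observe that $\mathcal{M}^S(\Phi(E))$ is finite because $\mathcal{I}(\Phi(E))$ is (Theorem~\ref{in.3}), and that $\fb^{S,\Phi(E)}=\tau^S(R)=\fc$ by Corollary~\ref{fp.6} and the choice of $S$; hence Proposition~\ref{st.5} gives $J=\ann_{\Phi(E)}(\fc R[x,f])$ with $(0:_RJ)=\fc$, and then \ref{np.2}(i) gives $\grann_{R[x,f]}J=\fc R[x,f]$, so that $J$ is naturally a left module over $R[x,f]/\fc R[x,f]\cong(R/\fc)[x,f]$. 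Being a submodule of the $x$-torsion-free module $\Phi(E)$ (Theorem~\ref{in.3}), $J$ is $x$-torsion-free over $(R/\fc)[x,f]$. The $(R/\fc)[x,f]$-submodules of $J$ are precisely its $R[x,f]$-submodules $L$; for such $L$ one has $(0:_{R/\fc}L)=(0:_RL)/\fc$ with $(0:_RL)\in\mathcal{I}(\Phi(E))$ and $(0:_RL)\supseteq\fc$, while conversely $\ann_{\Phi(E)}(\fg R[x,f])$ is a submodule of $J$ with $R$-annihilator $\fg$ for each $\fg\in\mathcal{I}(\Phi(E))$ with $\fg\supseteq\fc$, by~\ref{np.2}(ii). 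This yields the asserted description of $\mathcal{I}_{R/\fc}(J)$.

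Part (iii) is where completeness is used. By part (i), $J_0=0^{*,S}_E$, and $(0:_R0^{*,S}_E)=\fb^{S,\Phi(E)}=\fc$ by Proposition~\ref{inv.6}(v) (together with Corollary~\ref{fp.6} and the choice of $S$). Since $R$ is complete and local, every submodule $N$ of $E=E_R(R/\fm)$ is Matlis-reflexive and therefore satisfies $N=(0:_E(0:_RN))$; applying this with $N=J_0$ gives $J_0=(0:_E\fc)$. Finally $(0:_E\fc)=\Hom_R(R/\fc,E_R(R/\fm))$ is, by the standard behaviour of injective hulls under passage to a quotient ring, $R/\fc$-isomorphic to $E_{R/\fc}((R/\fc)/(\fm/\fc))$.

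For part (iv), by Theorem~\ref{fp.2} it suffices to put an $x$-torsion-free left $(R/\fc)[x,f]$-module structure, extending the $R/\fc$-module structure, on $J_0\cong E_{R/\fc}((R/\fc)/(\fm/\fc))$. Since $R$ is $F$-pure, fix (Theorem~\ref{fp.2}) an $x$-torsion-free left $R[x,f]$-module structure on $E$ in which $x$ acts as an injective additive map $\theta\colon E\to E$; the universal property of $\Phi$ produces the $R[x,f]$-homomorphism $\rho\colon\Phi(E)\lra E$ with $\rho(x^n\otimes e)=\theta^n(e)$, whose degree-$0$ component is $\Id_E$. Then $\rho(J)$ is an $R[x,f]$-submodule of $E$ annihilated by $\fc$ (because $\fc=(0:_RJ)$), so $\rho(J)\subseteq(0:_E\fc)=J_0$ by part (iii); and $J_0=\rho(J_0)\subseteq\rho(J)$, so $\rho(J)=J_0$ and hence $\theta(J_0)\subseteq J_0$. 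The restriction $\theta|_{J_0}$ is injective and is annihilated by $\fc$, so it equips $J_0$ with an $x$-torsion-free $(R/\fc)[x,f]$-module structure of the required kind; by part (iii) and Theorem~\ref{fp.2} applied to $R/\fc$, the ring $R/\fc$ is $F$-pure.

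For part (v), by part (iii) the module $J_0$ is $R/\fc$-isomorphic to an injective cogenerator of $R/\fc$, and by part (ii) $J=\bigoplus_{n\in\nn}J_n$ is an $\nn$-graded $x$-torsion-free left $(R/\fc)[x,f]$-module with degree-zero component $J_0$; so I apply the Embedding Theorem~\ref{inv.5} over the ring $R/\fc$, with its ``$H$'' taken to be $J$ and its ``$M$'' taken to be $J_0$. Its final assertion gives that $\Phi_{R/\fc}(J_0)$ is $x$-torsion-free and $\mathcal{I}(\Phi_{R/\fc}(J_0))\subseteq\mathcal{I}_{R/\fc}(J)$, and combining this with the description of $\mathcal{I}_{R/\fc}(J)$ from part (ii) gives the displayed inclusion of families of ideals of $R$. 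The main obstacle lies in parts (iii) and (iv): one must know that $J_0$ is the \emph{whole} annihilator $(0:_E\fc)$ --- which can fail when $R$ is not complete, since then $0^{*,S}_E$ need not be an annihilator submodule of $E$ --- and that a Frobenius action on $E$ restricts to $J_0$; granting these, part (v) is purely formal and parts (i)--(ii) are routine uses of the Basic Correspondence.
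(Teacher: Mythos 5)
Your proof is correct, and parts (i), (ii), (iii), and (v) track the paper's argument closely (Example~\ref{st.3}(ii), the identification $\grann_{R[x,f]}J=\fc R[x,f]$ combined with \ref{np.2}(ii), Matlis duality, and the Embedding Theorem~\ref{inv.5} over $R/\fc$ with $H=J$, $M=J_0$, respectively). Where you genuinely diverge is in part (iv). The paper handles (iv) and (v) in a single stroke: it applies the Embedding Theorem over $\overline{R}=R/\fc$ with $H=J$ and \emph{arbitrary} $\overline{R}$-module $N$ in the r\^ole of $M$; since $J$ is $x$-torsion-free by (ii), the ``Furthermore'' clause yields that $\Phi_{\overline{R}}(N)$ is $x$-torsion-free for every $N$, and this is precisely the characterization of $F$-purity from the Introduction. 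You instead build an explicit $x$-torsion-free $(R/\fc)[x,f]$-structure on $J_0$ by fixing (via Theorem~\ref{fp.2}) an $x$-torsion-free action $\theta$ on $E$, letting $\rho\colon\Phi(E)\to E$ be the evaluation map, and showing $\rho(J)\subseteq(0:_E\fc)=J_0$ (by (iii)) so that $\theta(J_0)\subseteq J_0$; then you invoke Theorem~\ref{fp.2} over $R/\fc$. Both routes are sound. The paper's is shorter and avoids fixing a particular Frobenius action on $E$, while yours has the advantage of exhibiting concretely a Frobenius action on $E_{R/\fc}((R/\fc)/(\fm/\fc))$ --- it is simply the restriction of any Frobenius action on $E_R(R/\fm)$ to the subquotient $(0:_E\fc)$ --- which is a useful piece of information in its own right. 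One small point of phrasing: when you say $\theta|_{J_0}$ ``is annihilated by $\fc$'', what you mean (and what is needed) is that the induced $R[x,f]$-structure on $J_0$ is annihilated by the graded two-sided ideal $\fc R[x,f]$, so it descends to an $(R/\fc)[x,f]$-structure; this follows from $\fc J_0=0$ together with the observation that $\fc R[x,f]$ is two-sided, as you implicitly use.
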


\begin{proof} Set $\overline{R} := R/\fc$.

(i) It follows from Example \ref{st.3}(ii) that
$$
\Delta^{S}(\Phi(E)) = 0^{*,S}_{E} \oplus
0^{*,S}_{Rx\otimes_RE}\oplus \cdots \oplus 0^{*,S}_{Rx^n\otimes_RE}
\oplus \cdots.
$$

(ii) Since $\grann_{R[x,f]}\Delta^{S}(\Phi(E)) = \fc R[x,f]$, we see
that $\fc$ annihilates $\Delta^{S}(\Phi(E))$, and so the latter
inherits a structure as an $x$-torsion-free left
$\overline{R}[x,f]$-module. As the $R[x,f]$-submodules of $J$ are
exactly the $R[x,f]$-submodules of $\Phi(E)$ contained in $J$, the
claim about ${\mathcal I}_{\overline{R}}(J)$ is clear.

(iii) Note that $\fc = \fb^{S,\Phi(E)} = \tau^S(R)$, and that, by
Proposition \ref{inv.6}(v), this is the $R$-annihilator of
$0^{*,S}_{E}$. Since $R$ is complete, we can conclude that
$0^{*,S}_{E} = (0:_E\fc)$, by Matlis duality (see, for example,
\cite[p.\ 154]{SV}).

(iv),(v) Let $N$ be an $\overline{R}$-module. Use the Embedding
Theorem \ref{inv.5} over the ring $\overline{R}$ (with $J$ playing
the r\^ole of $H$) to deduce that $\Phi_{\overline{R}}(N)$ is
$x$-torsion-free and ${\mathcal I}(\Phi_{\overline{R}}(N)) \subseteq
{\mathcal I}_{\overline{R}}(J)$. These are true for each
$\overline{R}$-module $N$, and, in particular, for $J_0$. It follows
that $\overline{R}$ is $F$-pure.

The final claim follows from the description of ${\mathcal
I}_{\overline{R}}(J)$ given in part (ii).
\end{proof}

\begin{cor}
\label{fp.12} Suppose that $(R,\fm)$ is local, $F$-pure and
complete. Denote $E_R(R/\fm)$ by $E$. Let $\fc \in
\mathcal{I}(\Phi(E))$ with $\fc \neq R$. Denote $R/\fc$ by
$\overline{R}$, and note that $\overline{R}$ is $F$-pure, by
Theorem\/ {\rm \ref{fp.11}(iv)}. Let $T$ be a multiplicatively
closed subset of $\overline{R}$ which is the complement in
$\overline{R}$ of the union of finitely many prime ideals.

\begin{enumerate}
\item If\/ $\fh$ denotes the unique ideal of $R$ that contains $\fc$ and
is such that $\fh/\fc = \tau^{{\rm fg},T}(\overline{R})$, the
finitistic $T$-test ideal of $\overline{R}$, then $\fh \in
\mathcal{I}(\Phi(E))$.
\item In particular, if\/ $\fh'$ denotes the unique ideal of $R$ that contains $\fc$ and
is such that $\fh'/\fc = \tau(\overline{R})$, the test ideal of
$\overline{R}$, then $\fh' \in \mathcal{I}(\Phi(E))$.
\item If\/ $\fg$ denotes the unique ideal of $R$ that contains $\fc$ and
is such that $\fg/\fc = \tau^T(\overline{R})$, the $T$-test ideal of
$\overline{R}$, then $\fg \in \mathcal{I}(\Phi(E))$.
\item In particular, if\/ $\fg'$ denotes the unique ideal of $R$ that contains $\fc$ and
is such that $\fg'/\fc = \widetilde{\tau}(\overline{R})$, the big
test ideal of $\overline{R}$, then $\fg' \in \mathcal{I}(\Phi(E))$.
\end{enumerate}
\end{cor}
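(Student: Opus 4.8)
The plan is to deduce Corollary \ref{fp.12} from Theorem \ref{fp.11} by transferring the $S$-test ideal machinery of Sections \ref{st}--\ref{inv} to the quotient ring $\overline{R} = R/\fc$. The key observation is that Theorem \ref{fp.11}(iv) tells us $\overline{R}$ is $F$-pure, and Theorem \ref{fp.11}(iii) identifies $J_0 = (0:_E\fc)$ with $E_{\overline{R}}(\overline{R}/(\fm/\fc))$, the injective envelope of the simple $\overline{R}$-module. So all the results of Section \ref{inv} apply to $\overline{R}$ with $J_0$ playing the r\^ole of ``$E$''. In particular, for part (iii), Corollary \ref{fp.6} (applied over $\overline{R}$) gives $\tau^T(\overline{R}) = \fb^{T,\Phi_{\overline{R}}(J_0)}$, which lies in ${\mathcal I}(\Phi_{\overline{R}}(J_0))$; and for part (i), Proposition \ref{inv.6}(iii) (over $\overline{R}$) gives $\tau^{{\rm fg},T}(\overline{R}) = (0:_{\overline{R}}0^{*{\rm fg},T}_{J_0})$, which also lies in ${\mathcal I}(\Phi_{\overline{R}}(J_0))$.

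Next I would invoke Theorem \ref{fp.11}(v), which says precisely that any ideal $\fd$ of $R$ with $\fd \supseteq \fc$ and $\fd/\fc \in {\mathcal I}(\Phi_{\overline{R}}(J_0))$ automatically belongs to ${\mathcal I}(\Phi_R(E)) = {\mathcal I}(\Phi(E))$. Combining this with the previous paragraph: in part (i), the ideal $\fh$ is exactly the ideal of $R$ with $\fh \supseteq \fc$ and $\fh/\fc = \tau^{{\rm fg},T}(\overline{R}) \in {\mathcal I}(\Phi_{\overline{R}}(J_0))$, so $\fh \in {\mathcal I}(\Phi(E))$; similarly in part (iii), $\fg/\fc = \tau^T(\overline{R}) \in {\mathcal I}(\Phi_{\overline{R}}(J_0))$, so $\fg \in {\mathcal I}(\Phi(E))$. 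There is a small point to check here, namely that $\fh$ and $\fg$ are well-defined (unique) ideals of $R$ containing $\fc$: this is immediate from the order-preserving bijection between ideals of $R$ containing $\fc$ and ideals of $\overline{R}$.

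Finally, parts (ii) and (iv) are the special cases of (i) and (iii) obtained by taking $T = \overline{R}^{\,\circ}$, the complement in $\overline{R}$ of the union of its minimal primes, which is indeed the complement of the union of finitely many prime ideals. For (ii) one uses that $\tau^{{\rm fg},\overline{R}^{\,\circ}}(\overline{R}) = \tau(\overline{R})$ by Remark \ref{inv.7}(ii) (applied over $\overline{R}$), which is a restatement of the Hochster--Huneke result \cite[Proposition (8.23)(d)]{HocHun90}; for (iv) one uses that $\tau^{\overline{R}^{\,\circ}}(\overline{R}) = \widetilde{\tau}(\overline{R})$, which is Remark \ref{inv.7}(i) over $\overline{R}$ (equivalently Reminder \ref{inv.5r} together with Corollary \ref{fp.6}). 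I do not anticipate a serious obstacle: the entire argument is an application of Theorem \ref{fp.11} plus the already-established theory over the quotient ring. The only place requiring a little care is confirming that $J_0$ genuinely is $E_{\overline{R}}(\overline{R}/(\fm/\fc))$ so that Section \ref{inv}'s hypotheses are met --- but this is exactly Theorem \ref{fp.11}(iii), which was proved using Matlis duality and the completeness of $R$.
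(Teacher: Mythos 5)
Your proposal follows essentially the same route as the paper: use Theorem \ref{fp.11}(iii) to identify $J_0$ with $E_{\overline{R}}(\overline{R}/(\fm/\fc))$, apply Proposition \ref{inv.6}(iii) (for part (i)) and Proposition \ref{inv.6}(v) together with Corollary \ref{fp.6} (for part (iii)) over $\overline{R}$ to place $\tau^{{\rm fg},T}(\overline{R})$ and $\tau^T(\overline{R})$ in ${\mathcal I}(\Phi_{\overline{R}}(J_0))$, then transfer back to ${\mathcal I}(\Phi(E))$ via Theorem \ref{fp.11}(v), and obtain (ii) and (iv) as the special cases $T = \overline{R}^{\,\circ}$. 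This matches the paper's argument in both structure and the specific results cited.
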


\begin{proof} Use the notation of Theorem \ref{fp.11}. Note that, as $\overline{R}$-module, $J_0$ is
the injective envelope of the simple $\overline{R}$-module.

(i) By Proposition \ref{inv.6}(iii), we have $\tau^{{\rm
fg},T}(\overline{R}) \in {\mathcal I}(\Phi_{\overline{R}}(J_0))$.
The result therefore follows from Theorem \ref{fp.11}(v).

(ii) This is a special case of part (i): take $T =
\overline{R}^{\circ}$.

(iii) By Proposition \ref{inv.6}(v), we have $\tau^{T}(\overline{R})
\in {\mathcal I}(\Phi_{\overline{R}}(J_0))$. The result therefore
follows from Theorem \ref{fp.11}(v).

(iv) This is a special case of part (iii): take $T =
\overline{R}^{\circ}$.
\end{proof}

The remainder of the paper is devoted to the provision of some
examples of the above ideas.

\begin{ex}
\label{fp.14a} Let $K$ be an algebraically closed field of
characteristic $p$, and assume that $p \geq 5$ and that $p \equiv 1
\; (\mbox{mod}~ 3)$. Let $R' = K[[X,Y,Z]]$, where $X,Y,Z$ are
independent indeterminates, and $\fa = (X^3 + Y^3 + Z^3) \in \Spec
(R')$. By Huneke \cite[Examples 4.7 and 4.8]{Hunek98}, $R := R'/\fa$
is $F$-pure, and the test ideal $\tau(R) = \fm$. Because $R$ is
Gorenstein and excellent, the test ideal $\tau(R)$ is equal to the
big test ideal $\widetilde{\tau}(R)$, by \ref{inv.5q}(ii)(a). This
means that $\fm$ must be the smallest ideal in
$\mathcal{I}(\Phi(E))$ of positive height, so that
$
\mathcal{I}(\Phi(E))\cap \Spec(R) = \left\{ 0, \fm \right\}.
$
\end{ex}

\begin{rmd}
\label{fp.15} Suppose that $(R,\fm)$ is local and $F$-pure, and set
$E = E_R(R/\fm)$.

In the case where $R$ is an ($F$-pure) homomorphic image of an
$F$-finite regular local ring, Janet Cowden Vassilev showed in
\cite[\S3]{Cowde98} that there exists a strictly ascending chain $0
= \tau_0 \subset \tau_1 \subset \cdots \subset \tau_t \subset
\tau_{t+1} = R$ of radical ideals of $R$ such that, for each $i = 0,
\ldots, t$, the reduced local ring $R/\tau_i$ is $F$-pure and its
test ideal is exactly $\tau_{i+1}/\tau_i$. If $R$ is complete, all
of $\tau_0, \tau_1, \ldots, \tau_t$ and all their associated primes
belong to ${\mathcal I}(\Phi(E))$ (by Corollary \ref{fp.12}(ii) and
\cite[Theorem 3.6 and Corollary 3.7]{ga}).
\end{rmd}

\begin{lem}
\label{fp.16} Assume that $(R,\fm)$ is local, $F$-pure and complete.
Set $E = E_R(R/\fm)$.
\begin{enumerate}
\item There is a strictly ascending chain $0
= \tau_0 \subset \tau_1 \subset \cdots \subset \tau_t \subset
\tau_{t+1} = R$ of radical ideals of $R$ such that, for each $i = 0,
\ldots, t$, the reduced local ring $R/\tau_i$ is $F$-pure and its
test ideal is $\tau_{i+1}/\tau_i$. We call this the {\em test ideal
chain of $R$}. All of $\tau_0 = 0,\tau_1, \cdots ,\tau_t$, and all
their associated primes, belong to ${\mathcal I}(\Phi(E))$.
\item There is a strictly ascending chain $0
= \widetilde{\tau}_0 \subset \widetilde{\tau}_1 \subset \cdots
\subset \widetilde{\tau}_w \subset \widetilde{\tau}_{w+1} = R$ of
radical ideals in $\mathcal{I}(\Phi(E))$ such that, for each $i = 0,
\ldots, w$, the reduced local ring $R/\widetilde{\tau}_i$ is
$F$-pure and its big test ideal is
$\widetilde{\tau}_{i+1}/\widetilde{\tau}_i$. We call this the {\em
big test ideal chain of $R$}. All of $\widetilde{\tau}_0 =
0,\widetilde{\tau}_1, \cdots ,\widetilde{\tau}_w$, and all their
associated primes, belong to ${\mathcal I}(\Phi(E))$.
\end{enumerate}
\end{lem}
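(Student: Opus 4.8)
The plan is to build both chains by one and the same inductive recipe, reading off a new ideal at each stage as the preimage of a (big) test ideal and invoking the results already established to keep everything inside the finite set $\mathcal{I}(\Phi(E))$. First I would record the ingredients. By Theorem \ref{in.3} the set $\mathcal{I}(\Phi(E))$ is finite. Next, $0 = (0:_R\Phi(E)) \in \mathcal{I}(\Phi(E))$: indeed $\Phi(E)$ is an $R[x,f]$-submodule of itself, so its $R$-annihilator lies in $\mathcal{I}(\Phi(E))$, and that annihilator is contained in $(0:_RE)$, which is $0$ because $E=E_R(R/\fm)$ is faithful over the complete local ring $R$ (alternatively, one can use that all minimal primes of $R$ lie in $\mathcal{I}(\Phi(E))$, as noted just after Theorem \ref{in.3}, that $R$ is reduced since it is $F$-pure, and that $\mathcal{I}(\Phi(E))$ is closed under finite intersection by \cite[Corollary 1.12]{ga}). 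Finally, by \ref{np.2}(i),(iii) together with \cite[Theorem 3.6 and Corollary 3.7]{ga}, every member of $\mathcal{I}(\Phi(E))$ is a radical ideal, equal to an irredundant intersection of primes from $\mathcal{I}(\Phi(E))\cap\Spec(R)$, so that the associated (equivalently, minimal) primes of any member of $\mathcal{I}(\Phi(E))$ again lie in $\mathcal{I}(\Phi(E))$.

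For part (i) I would set $\tau_0 := 0 \in \mathcal{I}(\Phi(E))$ and argue inductively: suppose a radical ideal $\tau_i \in \mathcal{I}(\Phi(E))$ with $\tau_i \neq R$ has been produced. By Theorem \ref{fp.11}(iv) the local ring $R/\tau_i$ is $F$-pure; applying Theorem \ref{fp.4} to $R/\tau_i$ with the multiplicatively closed set taken to be $(R/\tau_i)^{\circ}$, and recalling from Remark \ref{inv.5r} that an $(R/\tau_i)^{\circ}$-test element is a big test element, we obtain a big test element of $R/\tau_i$, which is nonzero; hence the big test ideal, and a fortiori the test ideal $\tau(R/\tau_i)$, of $R/\tau_i$ is nonzero. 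Let $\tau_{i+1}$ be the unique ideal of $R$ containing $\tau_i$ with $\tau_{i+1}/\tau_i = \tau(R/\tau_i)$; then $\tau_i \subsetneq \tau_{i+1}$, and Corollary \ref{fp.12}(ii), applied with $\fc = \tau_i$, gives $\tau_{i+1} \in \mathcal{I}(\Phi(E))$. I would repeat this while $\tau_{i+1}\neq R$. Since the $\tau_i$ are pairwise distinct members of the finite set $\mathcal{I}(\Phi(E))$, the recipe must halt, necessarily at $\tau_{t+1}=R$ for some $t\geq 0$; by construction $R/\tau_i$ is then $F$-pure with test ideal $\tau_{i+1}/\tau_i$ for $i=0,\ldots,t$, each $\tau_i$ is a radical member of $\mathcal{I}(\Phi(E))$, and all of their associated primes lie in $\mathcal{I}(\Phi(E))$ by the remarks above. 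Part (ii) follows by running the identical induction with $\widetilde{\tau}_0 := 0$ and, at each stage, $\widetilde{\tau}_{i+1}$ the unique ideal of $R$ above $\widetilde{\tau}_i$ with $\widetilde{\tau}_{i+1}/\widetilde{\tau}_i = \widetilde{\tau}(R/\widetilde{\tau}_i)$; here strict ascent uses the same argument (now applied directly to the big test ideal, since $\widetilde{\tau}(R/\widetilde{\tau}_i)\neq 0$) and membership in $\mathcal{I}(\Phi(E))$ comes from Corollary \ref{fp.12}(iv) in place of (ii).

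Given the machinery already in place, this lemma is essentially a bookkeeping exercise, and the one substantive point is the \emph{strictness} of each chain --- that is, that the (big) test ideal of every quotient $R/\tau_i$ (respectively $R/\widetilde{\tau}_i$) is nonzero --- since it is strictness together with finiteness of $\mathcal{I}(\Phi(E))$ that forces termination. This is exactly where the existence theorem for $(R/\tau_i)^{\circ}$-test elements over the $F$-pure local ring $R/\tau_i$ (Theorem \ref{fp.4}) is used. The completeness of $R$ enters only through Theorem \ref{fp.11}(iv) and Corollary \ref{fp.12}, which carry it as a standing hypothesis; everything else is immediate from Theorem \ref{fp.11}(iv), Corollary \ref{fp.12}, Theorem \ref{in.3} and the description of $\mathcal{I}(\Phi(E))$ recalled from \cite{ga}.
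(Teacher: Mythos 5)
Your argument is correct. For part (ii) it coincides with the paper's proof, which likewise builds $\widetilde{\tau}_1, \widetilde{\tau}_2, \ldots$ successively via Theorem \ref{fp.11}(iv) and Corollary \ref{fp.12}(iv). For part (i), however, you take a different route: the paper does not re-derive the existence of the test ideal chain, but instead observes (via Theorem \ref{fp.2}) that $E$ carries an $x$-torsion-free $R[x,f]$-module structure and then quotes \cite[Corollary 3.8]{gatcti} as a black box for the existence of the chain in the complete case, appealing to Corollary \ref{fp.12}(ii) and \cite[Theorem 3.6, Corollary 3.7]{ga} only for membership of the $\tau_i$ and their associated primes in ${\mathcal I}(\Phi(E))$. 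You instead reconstruct the chain from scratch by the same induction used for part (ii), substituting Corollary \ref{fp.12}(ii) for Corollary \ref{fp.12}(iv) and using Theorem \ref{fp.4} (existence of a big test element, hence $0 \neq \widetilde{\tau}(R/\tau_i) \subseteq \tau(R/\tau_i)$) to guarantee strict ascent. Both arguments are valid; yours has the virtue of being self-contained and of treating (i) and (ii) by one uniform recipe, at the cost of re-proving existence that \cite[Corollary 3.8]{gatcti} already supplies. You also make explicit the termination argument (finiteness of ${\mathcal I}(\Phi(E))$ from Theorem \ref{in.3} together with strict ascent) that the paper leaves largely implicit in part (ii). One stylistic note: your verification that $0 \in {\mathcal I}(\Phi(E))$ is sound, but neither of your two arguments actually needs completeness --- $E_R(R/\fm)$ is faithful for any Noetherian local ring $R$ --- so mentioning completeness there is a small red herring.
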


\begin{note} We have not assumed that $R$ is $F$-finite in part (i).
If the conjecture mentioned in \ref{inv.5q}(ii) turns
out to be true, then the big test ideal chain and the test ideal
chain of $R$ would coincide.
\end{note}

\begin{proof} (i) We know from Theorem \ref{fp.2}
that $E$ can be given a structure as an $x$-torsion-free left
$R[x,f]$-module (that extends its $R$-module structure). It
therefore follows from work of the present author in \cite[Corollary
3.8]{gatcti} that, in this complete case, the test ideal chain of
$R$ exists. By Corollary \ref{fp.12}(ii), all the terms in the test
ideal chain of $R$ belong to ${\mathcal I}(\Phi(E))$, and all the
associated prime ideals of these ideals also belong to ${\mathcal
I}(\Phi(E))$, by \cite[Theorem 3.6 and Corollary 3.7]{ga}.

(ii) Let $\fc \in {\mathcal I}(\Phi(E))$ with $\fc \neq R$. By
Theorem \ref{fp.11}(iv), the ring $R/\fc$ is $F$-pure. By Corollary
\ref{fp.12}(iv), if $\fg'$ denotes the unique ideal of $R$ that
contains $\fc$ and is such that $\fg'/\fc =
\widetilde{\tau}(R/\fc)$, then $\fg' \in \mathcal{I}(\Phi(E))$. One
can therefore construct $\widetilde{\tau}_1(R),
\widetilde{\tau}_2(R), \ldots$ successively until some
$\widetilde{\tau}_{t+1}(R) = R$, when the process stops. Use
Corollary \ref{fp.12}(iv) and \cite[Theorem 3.6 and Corollary
3.7]{ga} again to complete the proof.
\end{proof}

We can now use some of Vassilev's computations in \cite[\S
3]{Cowde98} to give some examples.

\begin{exs}
\label{fp.17} Let $K$ be an algebraically closed field of
characteristic $p$. In these examples, $X,Y,Z,W$ denote independent
indeterminates over $K$, and $x,y,z,w$ denote the natural images of
$X,Y,Z,W$ (respectively) in $R'/\fa = R$ for appropriate choices of
$R'$ and a proper ideal $\fa$ of $R'$.

\begin{enumerate}
\item As in Vassilev \cite[Example 3.12(1)]{Cowde98}, take $$R' =
K[[X,Y,Z]], \quad \fa = (XY,XZ,YZ)\quad \mbox{and}  \quad R =
R'/\fa.$$ For this $R$, the test ideal chain is $0 \subset \fm
\subset R$. Since $R$ is an isolated singularity, we have $\tau(R) =
\widetilde{\tau}(R)$, by \ref{inv.5q}(ii)(d). Therefore $\fm$ is the
smallest ideal of positive height in ${\mathcal I}(\Phi(E))$. Thus
in this case,
$$
{\mathcal I}(\Phi(E)) \cap \Spec (R) = \left\{ (x,y),(x,z),(y,z),
\fm \right\}.
$$
\item As in Vassilev \cite[Example 3.12(2)]{Cowde98}, take $$R' =
K[[X,Y,Z,W]], \quad \fa = (XYZ,XYW,XZW,YZW)\quad \mbox{and}  \quad R
= R'/\fa.$$ For this $R$, the test ideal chain is $0 \subset
(xy,xz,xw,yz,yw,zw) \subset \fm \subset R$. It therefore follows
from Lemma \ref{fp.16}(i) that
\begin{align*}
\left\{ \right. (x,y), (x,z), (x,w), (y,z),&  (y,w), (z,w), (x,y,z),
(x,y,w),(x,z,w),
 \\ & (y,z,w),\left.  \fm \right\} \subseteq
{\mathcal I}(\Phi(E)) \cap \Spec (R).
\end{align*}
\item As in Vassilev \cite[Example 3.12(3)]{Cowde98}, take $R' =
K[[X,Y,Z]]$, $\fa = (XY,YZ)$, and $R = R'/\fa$. For this $R$, the
test ideal chain is $0 \subset \fm \subset R$. Because $R$ is an
isolated singularity, $\tau(R) = \widetilde{\tau}(R)$, by
\ref{inv.5q}(ii)(d). This means that $\fm$ must be the smallest
ideal in $\mathcal{I}(\Phi(E))$ of positive height, so that $
{\mathcal I}(\Phi(E)) \cap \Spec (R) = \left\{ (x,z), (y), \fm
\right\}.$
\item As in Vassilev \cite[Example 3.12(4)]{Cowde98}, take $$R' =
K[[X,Y,Z,W]],\quad \fa = (XY,ZW)\quad \mbox{and} \quad R = R'/\fa.$$
For this $R$, the test ideal chain is $0 \subset (xy,xz,xw,yz,yw,zw)
\subset \fm \subset R,$ so that
\begin{align*}
\left\{ (x,z), (x,w), (y,z), (y,w), (x,y,z), (x,y,w), \right. &
\left. \! \! (x,z,w), (y,z,w), \fm \right\} \\ &   \subseteq
{\mathcal I}(\Phi(E)) \cap \Spec (R)
\end{align*}
by Lemma \ref{fp.16}(i).
\end{enumerate}
\end{exs}

\end{document}